\theoremstyle{plain}
\newtheorem{Thm}{Theorem}[section]
\newtheorem{Lem}[Thm]{Lemma}
\newtheorem{Prop}[Thm]{Proposition}
\newtheorem{Cor}[Thm]{Corollary}
\theoremstyle{definition}
\newtheorem{Defi}[Thm]{Definition}
\newtheorem{Rem}[Thm]{Remark}
\newtheorem{Ex}[Thm]{Example}
\title{On the competition graphs of $d$-partial orders}
\author[1]{Jihoon CHOI
\thanks{This research was supported by Global Ph.D Fellowship Program through the National Research Foundation of Korea(NRF) funded by the Ministry of Education (No.\ NRF-2015H1A2A1033541).}}
\author[2]{Kyeong Seok KIM
\thanks{This work was partially supported by the Korea Foundation
for the Advancement of Science \& Creativity (KOFAC) grant
funded by the Korean Government (MEST).}}
\author[1]{Suh-Ryung KIM\textsuperscript{$\ast$}
\thanks{This work was partially supported by
the National Research Foundation of Korea (NRF) grant funded
by the Korea government (MEST) (No.\ NRF-2015R1A2A2A01006885).}
\thanks{Corresponding author: srkim@snu.ac.kr}}
\author[1]{Jung Yeun LEE}
\author[3]{Yoshio SANO
\thanks{This work was supported by JSPS KAKENHI grant number 15K20885.
}}
\affil[1]{Department of Mathematics Education,
Seoul National University,
Seoul 151-742, \newline
Republic of Korea}
\affil[2]{Department of Mathematical Sciences,
Korea Advanced Institute of Science and Technology, \newline
Daejeon 305-701, Republic of Korea}
\affil[3]{Division of Information Engineering,
Faculty of Engineering, Information and Systems, \newline
University of Tsukuba, Ibaraki 305-8573, Japan}
\date{}
\begin{document}

\maketitle

\begin{abstract}
In this paper, we study the competition graphs of $d$-partial orders
and obtain their characterization which extends results given
by Cho and Kim [H.~H.~Cho and S.~-R.~Kim:
A class of acyclic digraphs with interval competition graphs,
\emph{Discrete Applied Mathematics}
\textbf{148} (2005) 171--180].
We also show that any graph can be made into the competition graph of a $d$-partial order
for some positive integer $d$
as long as adding isolated vertices is allowed.
We then study graphs whose partial order competition dimensions are at most three,
where the partial order competition dimension of a graph $G$
is the smallest nonnegative integer $d$ such that $G$ together with some isolated vertices
is the competition graph of a $d$-partial order.
\end{abstract}


\noindent
{\bf Keywords:}
competition graph,
$d$-partial order,
homothetic regular $(d-1)$-simplices,
intersection graph,
partial order competition dimension

\noindent
{\bf 2010 Mathematics Subject Classification:} 05C20, 05C75

\section{Introduction}

The \emph{competition graph} of a digraph $D$, which is denoted by $C(D)$,
has the same vertex set as $D$ and has an edge $xy$
between two distinct vertices $x$ and $y$
if for some vertex $z \in V$,
the arcs $(x,z)$ and $(y,z)$ are in $D$.
The competition graph has been extensively studied over the last 40 years
(see the survey articles by Kim~\cite{Kim93} and Lundgren~\cite{Lundgren89}).

Let $d$ be a positive integer.
For $\mathbf{x} = (x_1,x_2,\ldots, x_d)$,
$\mathbf{y} = (y_1,y_2,\ldots, y_d) \in \mathbb{R}^d$,
we write
$\mathbf{x} \prec \mathbf{y}$
if $x_i<y_i$ for each $i=1, \ldots, d$.
For a finite subset $S$ of $\mathbb{R}^d$,
let $D_S$ be the digraph defined by $V(D_S) = S$ and
$A(D_S) = \{(\mathbf{x},\mathbf{v}) \mid \mathbf{v}, \mathbf{x} \in S,
\mathbf{v} \prec \mathbf{x} \}$.
A digraph $D$ is called a \emph{$d$-partial order}
if there exists a finite subset $S$ of $\mathbb{R}^d$
such that $D$ is isomorphic to the digraph $D_S$.
By convention, the zero-dimensional Euclidean space $\mathbb{R}^0$
consists of a single point $0$.
In this context, we define a digraph with exactly one vertex as a \emph{$0$-partial order}.
A $2$-partial order is also called \emph{doubly partial order}
(see Figure~\ref{egofdpo} for an example).

\begin{figure}
\psfrag{D}{\small $D$}
\psfrag{u(1,6)}{\small $(1,6)$} \psfrag{h(4,0)}{\small $(4,0)$}
\psfrag{v(2,5)}{\small $(2,5)$} \psfrag{w(4,4)}{\small $(4,4)$}
\psfrag{p(3,3)}{\small $(3,3)$} \psfrag{q(3,1)}{\small $(3,1)$}
\psfrag{s(1,2)}{\small $(1,2)$} \psfrag{t(0,4)}{\small $(0,4)$}
\psfrag{x(4,3)}{\small $(4,3)$} \psfrag{y(5,2)}{\small $(5,2)$}
\psfrag{z(5,1)}{\small $(5,1)$}
\begin{center}
\includegraphics[height=5cm]{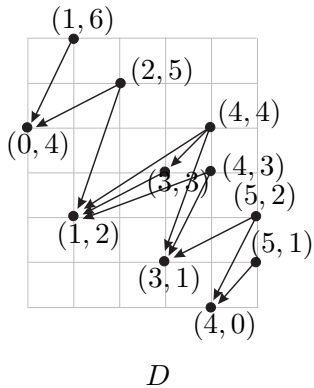}
\end{center}
\caption{A doubly partial order $D$}
\label{egofdpo}
\end{figure}

Cho and Kim~\cite{chokim} studied the competition graphs of doubly partial orders
and showed that the competition graphs of these digraphs are interval graphs
and any interval graph can be made into the competition graph of a doubly partial order
by adding sufficiently many isolated vertices.

\begin{Thm}[\cite{chokim}]\label{dpo}
The competition graph of a doubly partial order is an interval graph.
\end{Thm}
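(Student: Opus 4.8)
The plan is to construct an explicit interval representation of $C(D)$ directly from a point set realizing $D$. Since $D$ is a doubly partial order, $D$ is isomorphic to $D_S$ for some finite $S\subseteq\mathbb{R}^2$, so it suffices to treat $D=D_S$. Write each point of $S$ as $\mathbf{x}=(x_1,x_2)$. By the definitions of $D_S$ and of the competition graph, two distinct vertices $\mathbf{x},\mathbf{y}\in S$ are adjacent in $C(D_S)$ if and only if they have a common prey, i.e.\ there is $\mathbf{z}=(z_1,z_2)\in S$ with $z_1<\min\{x_1,y_1\}$ and $z_2<\min\{x_2,y_2\}$.

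First I would record this adjacency condition in a one-dimensional form. Define $\phi\colon\mathbb{R}\to\mathbb{R}\cup\{+\infty\}$ by letting $\phi(b)$ be the smallest first coordinate among the points of $S$ whose second coordinate is less than $b$ (and $\phi(b)=+\infty$ if there is no such point). Then $\phi$ is non-increasing, and a point $\mathbf{x}\in S$ has a prey in $D_S$ exactly when $\phi(x_2)<x_1$. Moreover, a common prey of $\mathbf{x}$ and $\mathbf{y}$ exists if and only if $\phi(\min\{x_2,y_2\})<\min\{x_1,y_1\}$; using that $\phi$ is non-increasing, the left-hand side equals $\max\{\phi(x_2),\phi(y_2)\}$, so the condition becomes the conjunction of the four inequalities $\phi(x_2)<x_1$, $\phi(x_2)<y_1$, $\phi(y_2)<x_1$, $\phi(y_2)<y_1$.

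Next I would exhibit the intervals. To each $\mathbf{x}\in S$ having a prey, assign the nonempty half-open interval $I_\mathbf{x}=[\phi(x_2),\,x_1)$. For two such points $\mathbf{x},\mathbf{y}$, the intersection $I_\mathbf{x}\cap I_\mathbf{y}=[\max\{\phi(x_2),\phi(y_2)\},\,\min\{x_1,y_1\})$ is nonempty precisely when the four inequalities above hold, i.e.\ precisely when $\mathbf{x}$ and $\mathbf{y}$ are adjacent in $C(D_S)$. A vertex of $C(D_S)$ with no prey has no common prey with any vertex and is therefore isolated; assign each such vertex its own one-point interval placed strictly to the right of every $x_1$, so that these intervals are pairwise disjoint and disjoint from all the $I_\mathbf{x}$. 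The resulting family represents $C(D_S)$, and replacing each half-open interval $[\phi(x_2),x_1)$ by the closed interval $[\phi(x_2),x_1-\varepsilon]$ for a single sufficiently small $\varepsilon>0$ (smaller than every positive difference among the finitely many endpoint values involved) changes no intersection pattern. Hence $C(D)$ is an interval graph.

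The substantive step is the first reduction: recognizing that the inherently two-dimensional condition that $S$ meet the lower-left quadrant of $(\min\{x_1,y_1\},\min\{x_2,y_2\})$ factors, via the monotone staircase function $\phi$, into a statement about the meeting of the half-lines $[\phi(x_2),x_1)$ and $[\phi(y_2),y_1)$. Once that is in hand the verification is routine; the only points requiring a little care are the separate treatment of prey-free (hence isolated) vertices and the harmless passage from half-open to closed intervals. An essentially equivalent alternative is to list $S$ in increasing order of first coordinate, use the running minima of the second coordinates in place of $\phi$, and argue the same way.
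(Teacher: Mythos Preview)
Your argument is correct. The key observation---that the staircase function $\phi$ collapses the two-dimensional prey condition into a one-dimensional overlap of the half-open intervals $[\phi(x_2),x_1)$---is sound, and the bookkeeping for isolated vertices and the passage to closed intervals are handled properly.

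There is, however, nothing to compare against: in this paper Theorem~\ref{dpo} is quoted from \cite{chokim} as background and is not proved here. The machinery the present paper develops in Section~2 is related but distinct. For $d=2$ it assigns to each point $\mathbf{p}$ (after translating into $\mathcal{H}^2_+$) the open segment $A^{1}(\mathbf{p})$ on the line $x_1+x_2=0$, which, parametrised by the second coordinate, is simply $(-p_1,p_2)$. With those ``intrinsic'' intervals, mere intersection does \emph{not} force adjacency in $C(D)$, because adjacency requires an actual vertex $\mathbf{a}\in S$ whose segment lies in the overlap (this is condition~($\star$) in Theorem~\ref{thm:intersectiongeneral}). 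Your construction sidesteps that gap by replacing the left endpoint $-p_2$ with $\phi(p_2)$, which already encodes the presence of prey in $S$; the resulting intervals then have intersection graph exactly $C(D)$. So your proof is self-contained and in fact more direct for the interval-graph conclusion than what one would extract from Section~2 here.

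One cosmetic remark: the conversion to closed intervals at the end is not needed, since a graph admitting a representation by half-open intervals is already an interval graph; but including it does no harm.
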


\begin{Thm}[\cite{chokim}]\label{interval}
Every interval graph can be made into the competition graph of
a doubly partial order by adding sufficiently many isolated vertices.
\end{Thm}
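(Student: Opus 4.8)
I would argue by an explicit geometric construction. Fix an interval representation $\{[a_v,b_v]:v\in V(G)\}$ of $G$ in which the endpoints are pairwise distinct and $a_v<b_v$ for every $v$; any representation can be perturbed into such a form. Because the endpoints are distinct, for $u\neq v$ one has $uv\in E(G)$ if and only if $[a_u,b_u]\cap[a_v,b_v]\neq\emptyset$ if and only if the open intervals $(a_u,b_u)$ and $(a_v,b_v)$ intersect. Now place each vertex $v$ at the point $\mathbf{p}_v:=(-a_v,b_v)\in\mathbb{R}^2$, and, for each edge $e=uv$ of $G$, choose a real number $t_e\in(a_u,b_u)\cap(a_v,b_v)$ (nonempty by the previous remark), taking the $t_e$ pairwise distinct, and place an auxiliary point $\mathbf{c}_e:=(-t_e,t_e)$. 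Let $S$ be the (clearly $2$-element-distinct) set consisting of all the $\mathbf{p}_v$ and all the $\mathbf{c}_e$. The claim to verify is that $C(D_S)$ is exactly $G$ together with the isolated vertices $\mathbf{c}_e$, which proves the theorem since the $\mathbf{c}_e$ are the ``added'' isolated vertices.

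The computations needed are short. For a vertex $v$ and an edge $e$, $\mathbf{c}_e\prec\mathbf{p}_v$ holds precisely when $-t_e<-a_v$ and $t_e<b_v$, i.e.\ when $t_e\in(a_v,b_v)$; hence $\mathbf{c}_e$ is ``$\prec$-below'' exactly those $\mathbf{p}_v$ whose open interval contains the single point $t_e$, a family of pairwise intersecting intervals. Also $\mathbf{p}_u\prec\mathbf{p}_v$ holds precisely when $a_v<a_u$ and $b_u<b_v$, i.e.\ when $[a_u,b_u]\subsetneq(a_v,b_v)$. Finally, nothing in $S$ is $\prec$-below any $\mathbf{c}_e$: $\mathbf{p}_v\prec\mathbf{c}_e$ would force $t_e<a_v$ and $b_v<t_e$, and $\mathbf{c}_{e'}\prec\mathbf{c}_e$ would force $t_e<t_{e'}<t_e$, both absurd. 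In particular each $\mathbf{c}_e$ is a minimal element of $D_S$, so it has no prey and is therefore an isolated vertex of $C(D_S)$.

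With these facts the verification is immediate. If $\mathbf{p}_u$ and $\mathbf{p}_v$ ($u\neq v$) have a common prey in $S$, that prey is either some $\mathbf{c}_e$, which puts $t_e$ in $(a_u,b_u)\cap(a_v,b_v)$, or some $\mathbf{p}_w$, which puts the nonempty set $[a_w,b_w]$ inside both $(a_u,b_u)$ and $(a_v,b_v)$; either way $[a_u,b_u]\cap[a_v,b_v]\neq\emptyset$, so $uv\in E(G)$. Conversely, if $uv\in E(G)$ then the point $\mathbf{c}_{uv}$ satisfies $t_{uv}\in(a_u,b_u)\cap(a_v,b_v)$, so $\mathbf{c}_{uv}\prec\mathbf{p}_u$ and $\mathbf{c}_{uv}\prec\mathbf{p}_v$, making $\mathbf{p}_u,\mathbf{p}_v$ adjacent in $C(D_S)$. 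The step I expect to be the real content is the first half of this paragraph, namely ruling out \emph{extra} edges of $C(D_S)$. The device that makes it work is placing the auxiliary points on the anti-diagonal $\{(-t,t)\}$: then ``being a prey of $\mathbf{c}_e$'' translates exactly into ``having $t_e$ in one's open interval'', and intervals through a common point pairwise intersect; the only remaining candidate for a common prey, a vertex point $\mathbf{p}_w$, arises only for strictly nested intervals, which again force an edge. Everything else --- existence of a distinct-endpoint representation, distinctness of the constructed points, minimality of the $\mathbf{c}_e$ --- is routine.
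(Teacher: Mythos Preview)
Your proof is correct. Note first that the paper does not give its own proof of this statement: it is quoted from Cho--Kim, and then recovered later as the $d=2$ instance of Theorem~\ref{thm:closed} (see the Remark following it). Unwinding that route for $d=2$, one passes from closed intervals to slightly enlarged open intervals (Lemma~\ref{lem:closed}), inserts for each intersecting pair $A_i\cap A_j$ a tiny open interval $B_{ij}$ whose closure sits inside the intersection (Theorem~\ref{thm:open}), and then invokes the bijection of Corollary~\ref{cor:bijection} between points of $\mathcal{H}_+^2$ and open intervals on $\mathcal{H}^2$ to read off the doubly partial order (Theorem~\ref{thm:intersectiongeneral}). Your construction is the explicit, coordinates-in-hand version of exactly this: the assignment $v\mapsto(-a_v,b_v)$ is precisely the inverse of the paper's map $\varphi\circ f_*$ once one parametrizes $\mathcal{H}^2$ by $t\mapsto(-t,t)$, and your auxiliary points $\mathbf{c}_e=(-t_e,t_e)$ play the role of the $B_{ij}$. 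The only cosmetic difference is that you put the auxiliary prey exactly on the anti-diagonal $\mathcal{H}^2$ (degenerate ``intervals''), while the paper's general machinery uses genuinely open $B_{ij}\subset\mathcal{H}_+^2$; both choices work, and yours makes the minimality of the $\mathbf{c}_e$ automatic. Your argument is more elementary and self-contained; the paper's buys the statement as a special case of the $d$-dimensional theory.
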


\noindent
Several variants of competition graphs of doubly partial orders
also have been studied
(see \cite{SJkim, Niche2009, hypergraph, LuWu, PLK:mStepDPO, Phylogeny}).

In this paper, we study the competition graphs of $d$-partial orders.
We obtain their characterization which nicely extends results given by Cho and Kim~\cite{chokim}.
We also show that any graph can be made into the competition graph of a $d$-partial order
for some positive integer $d$
as long as adding isolated vertices is allowed.
We then introduce the notion of the partial order competition dimension of a graph.
Especially, we study graphs whose partial order competition dimensions
are at most three.

\section{The competition graphs of $d$-partial orders}

In this section, we use the following notation.
We use a bold faced letter to represent a point
in $\mathbb{R}^d$ ($d \geq 2$).
For $\mathbf{x} \in \mathbb{R}^d$,
let $x_i$ denote the $i$th component of $\mathbf{x}$
for each $i=1, \ldots, d$.
Let $\mathbf{e}_i \in \mathbb{R}^d$
be the standard unit vector whose $i$th component is $1$, i.e.,
$\mathbf{e}_1:=(1, 0, \ldots, 0)$, $\ldots$, $\mathbf{e}_d:=(0, \ldots, 0,1)$.
Let $\mathbf{1}$ be the all-one vector $(1,\ldots,1)$ in $\mathbb{R}^d$.
Note that, for $\mathbf{x} \in \mathbb{R}^d$,
the standard inner product of $\mathbf{x}$ and $\mathbf{1}$ is
\[
\mathbf{x} \cdot \mathbf{1} = \sum_{i=1}^d x_i.
\]

For $\mathbf{v}_1, \ldots, \mathbf{v}_n \in \mathbb{R}^d$,
let $\text{{\rm Conv}}(\mathbf{v}_1, \ldots, \mathbf{v}_n)$
denote the \emph{convex hull} of $\mathbf{v}_1, \ldots, \mathbf{v}_n \in \mathbb{R}^d$, i.e.,
\[
\text{{\rm Conv}}(\mathbf{v}_1, \ldots, \mathbf{v}_n) :=
\left\{ \sum_{i=1}^n \lambda_i\mathbf{v}_i \mid \sum_{i=1}^n \lambda_i=1,
\lambda_i \geq 0, 1
\leq i \leq n \right\}.
\]

\subsection{The regular $(d-1)$-dimensional simplex \boldmath$\triangle^{d-1}{(\bf p)}$}

Let $\mathcal{H}^d$ be
the hyperplane in $\mathbb{R}^d$ defined by
the equation $\mathbf{x} \cdot \mathbf{1} = 0$,
and let $\mathcal{H}_+^d$ be
the open half space in $\mathbb{R}^d$ defined by
the inequality $\mathbf{x} \cdot \mathbf{1} > 0$,
i.e.,
\[
\mathcal{H}^d := \{\mathbf{x} \in \mathbb{R}^d \mid
\mathbf{x} \cdot \mathbf{1} = 0 \},
\qquad
\mathcal{H}_+^d := \{\mathbf{x} \in \mathbb{R}^d \mid
\mathbf{x} \cdot \mathbf{1} > 0 \}.
\]
We fix a point $\mathbf{p}$ in $\mathcal{H}_+^d$.
Let $\triangle^{d-1}(\mathbf{p})$ be the intersection of the hyperplane $\mathcal{H}^d$
and the closed cone
\[
\{\mathbf{x} \in \mathbb{R}^d \mid x_i \leq p_i \ (i= 1, \ldots, d) \}.
\]

\begin{figure}
\psfrag{p}{\footnotesize${\bf p}=(p_1,p_2,p_3)$}
\psfrag{q}{\footnotesize$\mathcal{H}^3$}
\psfrag{x}{\footnotesize$x$}
\psfrag{y}{\footnotesize$y$}
\psfrag{z}{\footnotesize$z$}
\psfrag{r}{\footnotesize$\triangle^2({\bf p})$}
\psfrag{s}{\footnotesize$(-p_2-p_3,p_2,p_3)$}
\psfrag{t}{\footnotesize$(p_1,-p_1-p_3,p_3)$}
\psfrag{u}{\footnotesize$(p_1,p_2,-p_1-p_2)$}
\begin{center}
\includegraphics[height=5cm]{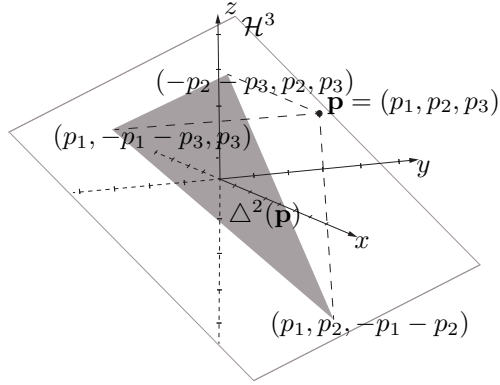}
\end{center}
\caption{A point ${\bf p} \in \mathcal{H}^3_+$ and the triangle $\triangle^2({\bf p})$}
\label{tetrahedron}
\end{figure}

\begin{Lem}\label{lem:delta}
For $\mathbf{p} \in \mathcal{H}_+^d$,
the set $\triangle^{d-1}(\mathbf{p})$ is the convex hull
$\text{{\rm Conv}}(\mathbf{v}_1, \ldots, \mathbf{v}_d)$
of the vectors $\mathbf{v}_1, \ldots, \mathbf{v}_d$ defined by
\begin{equation*}
\mathbf{v}_ i = \mathbf{p} - (\mathbf{p} \cdot \mathbf{1}) \mathbf{e}_i
\quad (i = 1, \ldots, d).
\end{equation*}
Moreover, $\triangle^{d-1}(\mathbf{p})$ is a regular $(d-1)$-simplex.
\end{Lem}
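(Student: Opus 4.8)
The plan is to establish the claimed identity $\triangle^{d-1}(\mathbf{p})=\text{Conv}(\mathbf{v}_1,\ldots,\mathbf{v}_d)$ by a two-way inclusion, and then to read off regularity from a one-line distance computation. First I would check that every $\mathbf{v}_i$ lies in $\triangle^{d-1}(\mathbf{p})$. Using $\mathbf{e}_i\cdot\mathbf{1}=1$ one gets $\mathbf{v}_i\cdot\mathbf{1}=\mathbf{p}\cdot\mathbf{1}-(\mathbf{p}\cdot\mathbf{1})=0$, so $\mathbf{v}_i\in\mathcal{H}^d$; and the $j$th coordinate of $\mathbf{v}_i$ is $p_j-(\mathbf{p}\cdot\mathbf{1})\delta_{ij}$, which equals $p_j$ for $j\ne i$ and equals $p_i-(\mathbf{p}\cdot\mathbf{1})\le p_i$ for $j=i$, where we use $\mathbf{p}\cdot\mathbf{1}>0$ (this is the one place the hypothesis $\mathbf{p}\in\mathcal{H}_+^d$ is needed). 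Hence each $\mathbf{v}_i$ lies in the hyperplane and in the closed cone defining $\triangle^{d-1}(\mathbf{p})$, and since $\triangle^{d-1}(\mathbf{p})$ is an intersection of two convex sets it is convex, giving $\text{Conv}(\mathbf{v}_1,\ldots,\mathbf{v}_d)\subseteq\triangle^{d-1}(\mathbf{p})$.

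For the reverse inclusion I would take an arbitrary $\mathbf{x}\in\triangle^{d-1}(\mathbf{p})$ and propose the coefficients $\lambda_i:=(p_i-x_i)/(\mathbf{p}\cdot\mathbf{1})$ for $i=1,\ldots,d$. Each $\lambda_i\ge 0$ because $x_i\le p_i$ and $\mathbf{p}\cdot\mathbf{1}>0$, and $\sum_{i=1}^d\lambda_i=(\mathbf{p}\cdot\mathbf{1}-\mathbf{x}\cdot\mathbf{1})/(\mathbf{p}\cdot\mathbf{1})=1$ since $\mathbf{x}\cdot\mathbf{1}=0$. Then $\sum_i\lambda_i\mathbf{v}_i=\bigl(\sum_i\lambda_i\bigr)\mathbf{p}-(\mathbf{p}\cdot\mathbf{1})\sum_i\lambda_i\mathbf{e}_i=\mathbf{p}-(\mathbf{p}\cdot\mathbf{1})(\lambda_1,\ldots,\lambda_d)$, whose $j$th coordinate is $p_j-(\mathbf{p}\cdot\mathbf{1})\lambda_j=x_j$. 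Thus $\mathbf{x}=\sum_i\lambda_i\mathbf{v}_i\in\text{Conv}(\mathbf{v}_1,\ldots,\mathbf{v}_d)$, which completes the identity.

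Finally, for the ``moreover'' part I would compute $\mathbf{v}_i-\mathbf{v}_j=(\mathbf{p}\cdot\mathbf{1})(\mathbf{e}_j-\mathbf{e}_i)$ for $i\ne j$, so $\|\mathbf{v}_i-\mathbf{v}_j\|=\sqrt{2}\,(\mathbf{p}\cdot\mathbf{1})$, the same value for every pair. Moreover $\mathbf{v}_2-\mathbf{v}_1,\ldots,\mathbf{v}_d-\mathbf{v}_1$ are nonzero multiples of $\mathbf{e}_1-\mathbf{e}_2,\ldots,\mathbf{e}_1-\mathbf{e}_d$, which are linearly independent, so $\mathbf{v}_1,\ldots,\mathbf{v}_d$ are affinely independent and $\text{Conv}(\mathbf{v}_1,\ldots,\mathbf{v}_d)$ is a $(d-1)$-simplex; having all $\binom{d}{2}$ edge lengths equal, it is regular. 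There is no serious obstacle in this argument: the only nonroutine step is guessing the barycentric coordinates $\lambda_i=(p_i-x_i)/(\mathbf{p}\cdot\mathbf{1})$, after which everything reduces to bookkeeping with the functional $\mathbf{x}\mapsto\mathbf{x}\cdot\mathbf{1}$, and the strict positivity $\mathbf{p}\cdot\mathbf{1}>0$ is exactly what makes the $\lambda_i$ admissible and the simplex nondegenerate.
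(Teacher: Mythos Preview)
Your proof is correct and follows the same overall approach as the paper---identify the vertices $\mathbf{v}_i$, verify the convex-hull identity, and compute the common edge length $\sqrt{2}\,(\mathbf{p}\cdot\mathbf{1})$ to conclude regularity. Your argument is in fact more explicit: the paper obtains the $\mathbf{v}_i$ geometrically (as the intersections of $\mathcal{H}^d$ with the axis-parallel lines through $\mathbf{p}$) and then asserts the identity ``by definition,'' whereas you supply the barycentric coordinates $\lambda_i=(p_i-x_i)/(\mathbf{p}\cdot\mathbf{1})$ for the reverse inclusion and correctly invoke affine (rather than linear) independence of the $\mathbf{v}_i$, which all lie in the $(d-1)$-dimensional subspace $\mathcal{H}^d$.
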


\begin{proof}
Let $\mathbf{v}_1$, $\ldots$, $\mathbf{v}_d \in \mathbb{R}^d$ be the intersections of
the hyperplane $\mathcal{H}^d$
and the lines going through $\mathbf{p}$ with directional vectors
$\mathbf{e}_1, \ldots, \mathbf{e}_d$, respectively.
Then
$\mathbf{v}_ i= \mathbf{p} - (\mathbf{p} \cdot \mathbf{1}) \mathbf{e}_i$ for $i = 1, \ldots, d$.
By definition, we have
$\triangle^{d-1}(\mathbf{p}) = \text{{\rm Conv}}(\mathbf{v}_1, \ldots, \mathbf{v}_d)$.
Since $\mathbf{v}_1, \ldots, \mathbf{v}_d$ are linearly independent,
the set $\triangle^{d-1}(\mathbf{p})$ is a $(d-1)$-simplex.
Moreover, since the length of each edge of $\triangle^{d-1}(\mathbf{p})$ is equal to
$\sqrt{2}(\mathbf{p} \cdot \mathbf{1})$,
the simplex $\triangle^{d-1}(\mathbf{p})$ is regular.
\end{proof}

Note that the distance between $\mathbf{p}$ and each vertex of $\triangle^{d-1}(\mathbf{p})$ is
equal to $\mathbf{p} \cdot \mathbf{1}$.
Moreover, the directional vector for the line passing through the vertices $\mathbf{v}_i$ and $\mathbf{v}_j$ is
$\mathbf{e}_j - \mathbf{e}_i$
for distinct $i$, $j$ in $\{1,\ldots,d\}$.
The center of $\triangle^{d-1}(\mathbf{p})$ is
$\frac{1}{d} \sum_{i=1}^d \mathbf{v}_i
= \mathbf{p} - \frac{1}{d}(\mathbf{p} \cdot \mathbf{1}) \mathbf{1}$.
Therefore,
the directional vector from this center to the point $\mathbf{p}$ is parallel to the all-one vector $\mathbf{1}$, and
the distance between this center and the point $\mathbf{p}$ is $\frac{1}{\sqrt{d}}(\mathbf{p} \cdot \mathbf{1})$
which is $\frac{1}{\sqrt{2d}}$ times the edge length of $\triangle^{d-1}(\mathbf{p})$.

We say that two geometric figures in $\mathbb{R}^d$ are \emph{homothetic} if
they are related by a geometric contraction or expansion.
From the above observation, we can conclude the following:

\begin{Prop}\label{X1}
If $\mathbf{p}, \mathbf{q} \in \mathcal{H}_+^d$,
then $\triangle^{d-1}(\mathbf{p})$ and $\triangle^{d-1}(\mathbf{q})$ are homothetic.
\end{Prop}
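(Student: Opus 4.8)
The plan is to exploit the explicit description of $\triangle^{d-1}(\mathbf{p})$ as $\text{{\rm Conv}}(\mathbf{v}_1,\ldots,\mathbf{v}_d)$ from Lemma~\ref{lem:delta}, together with the remarks following it about the center of the simplex, and then to produce an explicit homothety (a scaling about a point, possibly composed with a translation) carrying one simplex onto the other. First I would write, for $\mathbf{p} \in \mathcal{H}_+^d$, the vertices $\mathbf{v}_i(\mathbf{p}) = \mathbf{p} - (\mathbf{p}\cdot\mathbf{1})\mathbf{e}_i$ and the center $\mathbf{c}(\mathbf{p}) = \mathbf{p} - \tfrac{1}{d}(\mathbf{p}\cdot\mathbf{1})\mathbf{1}$, noting that $\mathbf{c}(\mathbf{p}) \in \mathcal{H}^d$ since $\mathbf{c}(\mathbf{p})\cdot\mathbf{1} = \mathbf{p}\cdot\mathbf{1} - (\mathbf{p}\cdot\mathbf{1}) = 0$. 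The key computation is that $\mathbf{v}_i(\mathbf{p}) - \mathbf{c}(\mathbf{p}) = \tfrac{1}{d}(\mathbf{p}\cdot\mathbf{1})\mathbf{1} - (\mathbf{p}\cdot\mathbf{1})\mathbf{e}_i = (\mathbf{p}\cdot\mathbf{1})\bigl(\tfrac{1}{d}\mathbf{1} - \mathbf{e}_i\bigr)$, so that the vector from the center to the $i$th vertex depends on $\mathbf{p}$ only through the positive scalar $\mathbf{p}\cdot\mathbf{1}$, while the ``shape vector'' $\tfrac{1}{d}\mathbf{1} - \mathbf{e}_i$ is independent of $\mathbf{p}$.

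Next I would define the map $\varphi:\mathbb{R}^d \to \mathbb{R}^d$ by
\[
\varphi(\mathbf{x}) = \mathbf{c}(\mathbf{q}) + \frac{\mathbf{q}\cdot\mathbf{1}}{\mathbf{p}\cdot\mathbf{1}}\bigl(\mathbf{x} - \mathbf{c}(\mathbf{p})\bigr),
\]
which is a well-defined homothety because $\mathbf{p}\cdot\mathbf{1} > 0$ and $\mathbf{q}\cdot\mathbf{1} > 0$, with ratio $\lambda := (\mathbf{q}\cdot\mathbf{1})/(\mathbf{p}\cdot\mathbf{1}) > 0$. Applying the displayed identity, $\varphi(\mathbf{v}_i(\mathbf{p})) = \mathbf{c}(\mathbf{q}) + \lambda(\mathbf{p}\cdot\mathbf{1})\bigl(\tfrac1d\mathbf{1} - \mathbf{e}_i\bigr) = \mathbf{c}(\mathbf{q}) + (\mathbf{q}\cdot\mathbf{1})\bigl(\tfrac1d\mathbf{1} - \mathbf{e}_i\bigr) = \mathbf{v}_i(\mathbf{q})$ for each $i$. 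Since an affine map sends the convex hull of a set to the convex hull of its image, $\varphi\bigl(\triangle^{d-1}(\mathbf{p})\bigr) = \varphi\bigl(\text{{\rm Conv}}(\mathbf{v}_1(\mathbf{p}),\ldots,\mathbf{v}_d(\mathbf{p}))\bigr) = \text{{\rm Conv}}(\mathbf{v}_1(\mathbf{q}),\ldots,\mathbf{v}_d(\mathbf{q})) = \triangle^{d-1}(\mathbf{q})$, which by definition of homothetic figures gives the conclusion.

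I do not expect a serious obstacle here; the statement is essentially the content of the paragraph immediately preceding it, which already observes that the center-to-$\mathbf{p}$ direction is always parallel to $\mathbf{1}$ and that the center-to-$\mathbf{p}$ distance is a fixed multiple of the edge length. The only point requiring a little care is whether ``homothetic'' in the sense the paper uses (a pure contraction/expansion, i.e.\ a scaling about a fixed point) should be matched exactly, or whether one is allowed to compose with a translation; since the two simplices generally have different centers, the map $\varphi$ above is a scaling followed by a translation, and one should remark that this is precisely a homothety with center the unique fixed point of $\varphi$ (which exists and is unique as long as $\lambda \neq 1$; when $\lambda = 1$ the two simplices are congruent translates, which one can either fold into a degenerate homothety or handle as a trivial separate case). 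Beyond that bookkeeping, the proof is the direct verification above.
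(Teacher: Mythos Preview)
Your proof is correct and follows exactly the line the paper intends: the proposition has no separate proof in the paper, being stated as an immediate consequence of the paragraph preceding it (edge lengths $\sqrt{2}(\mathbf{p}\cdot\mathbf{1})$, edge directions $\mathbf{e}_j-\mathbf{e}_i$, center $\mathbf{p}-\tfrac{1}{d}(\mathbf{p}\cdot\mathbf{1})\mathbf{1}$), and your explicit homothety $\varphi$ is precisely the map those observations point to. Your caveat about the $\lambda=1$ case is appropriate given the paper's informal definition of ``homothetic,'' but since the paper clearly uses the term throughout to mean ``related by a positive scaling and a translation'' (e.g.\ families of homothetic simplices of varying sizes and positions), no real issue arises.
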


\subsection{A bijection from $\mathcal{H}_+^d$
to the set of certain regular $(d-1)$-simplices}

\begin{Lem}\label{fixed}
The vertices of $\triangle^{d-1}(\mathbf{1})$
may be labeled as $\mathbf{w}_1, \ldots, \mathbf{w}_d$
so that $\mathbf{w}_j-\mathbf{w}_i$ is a positive scalar multiple of
$\mathbf{e}_i - \mathbf{e}_{j}$ for any distinct $i,j$ in $\{1,\ldots,d\}$.
\end{Lem}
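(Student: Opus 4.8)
The plan is to read the required labeling directly off Lemma~\ref{lem:delta}, applied to the distinguished point $\mathbf{p} = \mathbf{1}$ (which is why this simplex serves as a fixed reference). Since $\mathbf{1} \cdot \mathbf{1} = d$, Lemma~\ref{lem:delta} gives $\triangle^{d-1}(\mathbf{1}) = \text{{\rm Conv}}(\mathbf{v}_1, \ldots, \mathbf{v}_d)$ where $\mathbf{v}_i = \mathbf{1} - d\,\mathbf{e}_i$, and it asserts that these $d$ (affinely independent) points are precisely the vertices of this $(d-1)$-simplex. Accordingly I would simply set $\mathbf{w}_i := \mathbf{v}_i = \mathbf{1} - d\,\mathbf{e}_i$ for $i = 1, \ldots, d$.

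It then remains only to verify the stated relation between differences of vertices. For distinct $i, j \in \{1, \ldots, d\}$, I compute
\[
\mathbf{w}_j - \mathbf{w}_i = (\mathbf{1} - d\,\mathbf{e}_j) - (\mathbf{1} - d\,\mathbf{e}_i) = d(\mathbf{e}_i - \mathbf{e}_j).
\]
Because $d$ is a positive integer, this exhibits $\mathbf{w}_j - \mathbf{w}_i$ as a positive scalar multiple of $\mathbf{e}_i - \mathbf{e}_j$, which is exactly the claim.

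There is essentially no obstacle here: the lemma is a bookkeeping statement pinning down an orientation already implicit in the remark following Lemma~\ref{lem:delta}, where the line through $\mathbf{v}_i$ and $\mathbf{v}_j$ was identified as having direction $\mathbf{e}_j - \mathbf{e}_i$. The one point deserving care is the sign convention — one must check that it is $\mathbf{w}_j - \mathbf{w}_i$, and not $\mathbf{w}_i - \mathbf{w}_j$, that is a \emph{positive} multiple of $\mathbf{e}_i - \mathbf{e}_j$ — and the displayed computation settles this. (If one prefers to avoid quoting the explicit formula from Lemma~\ref{lem:delta}, the same conclusion follows by defining $\mathbf{w}_i$ geometrically as the intersection of $\mathcal{H}^d$ with the line through $\mathbf{1}$ in direction $\mathbf{e}_i$ and computing that intersection, but invoking Lemma~\ref{lem:delta} is the cleaner route.)
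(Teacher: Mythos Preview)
Your proof is correct and is essentially identical to the paper's own argument: the paper likewise invokes Lemma~\ref{lem:delta} to identify the vertices of $\triangle^{d-1}(\mathbf{1})$ as $\mathbf{1} - d\,\mathbf{e}_i$ and sets $\mathbf{w}_i := \mathbf{1} - d\,\mathbf{e}_i$, leaving the difference computation implicit. Your version is in fact slightly more explicit, since you display the calculation $\mathbf{w}_j - \mathbf{w}_i = d(\mathbf{e}_i - \mathbf{e}_j)$ that the paper omits.
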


\begin{proof}
By Lemma~\ref{lem:delta}, the vertices of $\triangle^{d-1}(\mathbf{1})$ are
$\mathbf{1} - d \mathbf{e}_{i}$ for $i=1, \ldots, d$.
We denote $\mathbf{1} - d \mathbf{e}_{i}$ by $\mathbf{w}_i$ to obtain the desired labeling.
\end{proof}

\begin{Lem}\label{regsim}
Let $d$ be an integer with $d \geq 2$.
Suppose that
$\Lambda$ is a regular $(d-1)$-simplex contained in the hyperplane $\mathcal{H}^d$
homothetic to $\triangle^{d-1}(\mathbf{1})$.
Then,
there exists
$\mathbf{p} \in \mathcal{H}_+^d$
such that $\Lambda=\triangle^{d-1}(\mathbf{p})$.
\end{Lem}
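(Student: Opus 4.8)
The plan is to show that any regular $(d-1)$-simplex $\Lambda$ lying in $\mathcal{H}^d$ and homothetic to $\triangle^{d-1}(\mathbf{1})$ arises as $\triangle^{d-1}(\mathbf{p})$ for a suitable $\mathbf{p}$, and the natural candidate for $\mathbf{p}$ will be determined by the center of $\Lambda$ together with its size. First I would recall from the discussion following Lemma~\ref{lem:delta} that $\triangle^{d-1}(\mathbf{p})$ has center $\mathbf{p}-\tfrac1d(\mathbf{p}\cdot\mathbf{1})\mathbf{1}$, which lies in $\mathcal{H}^d$, and that the vector from this center to $\mathbf{p}$ is $\tfrac1d(\mathbf{p}\cdot\mathbf{1})\mathbf{1}$, a positive multiple of $\mathbf{1}$ whose length encodes the edge length. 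So given $\Lambda$, let $\mathbf{c}$ be its center (which lies in $\mathcal{H}^d$ by hypothesis) and let $\ell$ be its edge length; I would then define $\mathbf{p} := \mathbf{c} + \tfrac{\ell}{\sqrt{2d}}\cdot\tfrac{1}{\sqrt d}\,\mathbf{1} = \mathbf{c} + \tfrac{\ell}{\sqrt 2\, d}\,\mathbf{1}$, chosen exactly so that $\mathbf{p}\cdot\mathbf{1} = \tfrac{d}{\sqrt2}\ell/\sqrt{d}\cdot\sqrt d/\sqrt d$ — in any case a positive number, so $\mathbf{p}\in\mathcal{H}_+^d$ — and so that $\triangle^{d-1}(\mathbf{p})$ has the same center and the same edge length as $\Lambda$.

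The next step is to argue that having the same center and edge length forces $\triangle^{d-1}(\mathbf{p}) = \Lambda$. This is where the homothety hypothesis does its real work: two regular $(d-1)$-simplices in the same hyperplane $\mathcal{H}^d$ with the same center and the same edge length need not coincide in general (they can differ by a rotation about the center), so I must use that $\Lambda$ is homothetic to $\triangle^{d-1}(\mathbf{1})$, hence homothetic to $\triangle^{d-1}(\mathbf{p})$ by Proposition~\ref{X1}. A homothety with a fixed center and scale factor of absolute value $1$ is either the identity or a point reflection; but a point reflection through the center of a regular simplex does not map the simplex to itself when $d\geq 2$ (the vertices go to the centroids of the opposite facets), so the homothety sending $\triangle^{d-1}(\mathbf{p})$ to $\Lambda$ must in fact have center $\mathbf{c}$ and scale factor $+1$, i.e. be the identity. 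Hence $\Lambda = \triangle^{d-1}(\mathbf{p})$.

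I expect the main obstacle to be making the phrase "homothetic" precise enough to run this argument cleanly: the paper's definition of homothetic ("related by a geometric contraction or expansion") must be read as allowing the homothety center to be anywhere and the ratio to be any nonzero real, and I will need to pin down that a homothety carrying one of our simplices onto another \emph{in the same hyperplane} is compatible with the center/edge-length bookkeeping above. An alternative, more computational route that sidesteps this is to use Lemma~\ref{fixed}: label the vertices $\mathbf{w}_1,\dots,\mathbf{w}_d$ of $\triangle^{d-1}(\mathbf{1})$ so that $\mathbf{w}_j-\mathbf{w}_i$ is a positive multiple of $\mathbf{e}_i-\mathbf{e}_j$, transport this labeling along the homothety to get a labeling $\mathbf{u}_1,\dots,\mathbf{u}_d$ of the vertices of $\Lambda$ with the same property, and then solve directly for $\mathbf{p}$ from the equations $\mathbf{u}_i = \mathbf{p}-(\mathbf{p}\cdot\mathbf{1})\mathbf{e}_i$. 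Summing these $d$ equations and using $\mathbf{u}_i\in\mathcal{H}^d$ gives $\mathbf{p}\cdot\mathbf{1} = \tfrac1{d-1}\sum_i \mathbf{u}_i\cdot\mathbf{1}$... wait, more simply: $\sum_i \mathbf{u}_i = d\mathbf{p} - (\mathbf{p}\cdot\mathbf{1})\mathbf{1}$, and dotting with $\mathbf{1}$ yields $0 = d(\mathbf{p}\cdot\mathbf{1}) - d(\mathbf{p}\cdot\mathbf{1})$, which is vacuous, so instead I recover $\mathbf{p}\cdot\mathbf{1}$ from an edge, e.g. $\mathbf{u}_i - \mathbf{u}_j = (\mathbf{p}\cdot\mathbf{1})(\mathbf{e}_j-\mathbf{e}_i)$ gives $\mathbf{p}\cdot\mathbf{1}$ as the (positive) common value of $u_{j,i}-u_{i,i}$, and then $\mathbf{p} = \mathbf{u}_i + (\mathbf{p}\cdot\mathbf{1})\mathbf{e}_i$; one checks this is independent of $i$ and that $\mathbf{p}\cdot\mathbf{1}>0$, so $\mathbf{p}\in\mathcal{H}_+^d$, and $\triangle^{d-1}(\mathbf{p})=\mathrm{Conv}(\mathbf{u}_1,\dots,\mathbf{u}_d)=\Lambda$ by Lemma~\ref{lem:delta}. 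I would present this second route as the main argument, since it is self-contained and uses only results already established in the excerpt.
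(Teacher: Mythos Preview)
Your second route---transporting the vertex labeling of Lemma~\ref{fixed} along the homothety and then solving $\mathbf{u}_i=\mathbf{p}-(\mathbf{p}\cdot\mathbf{1})\mathbf{e}_i$ for $\mathbf{p}$---is correct and is essentially the paper's proof; the paper phrases the reconstruction as $p_i:=\max_k(\mathbf{v}_k\cdot\mathbf{e}_i)$, but since all $\mathbf{v}_k$ with $k\neq i$ share the same $i$th coordinate this yields the same $\mathbf{p}$ you obtain. One small correction to your exploratory first route: for $d=2$ the $(d-1)$-simplex is a segment, which \emph{is} fixed by the point reflection through its center, so the ``ratio $-1$'' case cannot be excluded that way there---your instinct to make Route~2 the main argument is the right call.
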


\begin{proof}
Since $\Lambda$ is homothetic to $\triangle^{d-1}(\mathbf{1})$,
there exists $\mathbf{v}_1, \ldots, \mathbf{v}_d \in \mathbb{R}^d$
which are linearly independent
such that
$\Lambda=\text{{\rm Conv}}(\mathbf{v}_1, \ldots, \mathbf{v}_d)$
and $\mathbf{v}_j - \mathbf{v}_i$ is a positive scalar multiple of
$\mathbf{e}_i - \mathbf{e}_j$ for any distinct $i$ and $j$ in $\{1, \ldots, d\}$ by Lemma~\ref{fixed}.
Moreover, ${\mathbf v}_i \cdot \mathbf{e}_i  < {\mathbf v}_j \cdot \mathbf{e}_i $
for any distinct $i$ and $j$ in $\{1, \ldots, d\}$.
Let $\mathbf{p} \in \mathbb{R}^d$ be a vector
defined by $p_i : \max\{ \mathbf{v}_k \cdot \mathbf{e}_i \mid 1 \leq k \leq d \}$
$(i=1, \ldots, d)$.
Then, $\Lambda=\triangle^{d-1}(\mathbf{p})$.
Since $\mathbf{v}_1 \in \mathcal{H}^d$,
we have
$\mathbf{v}_1 \cdot \mathbf{1}=0$.
Since $p_i \geq \mathbf{v}_1 \cdot \mathbf{e}_i$ for any $i=1, \ldots, d$,
we have
$\mathbf{p} \cdot \mathbf{1} \geq \mathbf{v}_1 \cdot \mathbf{1}=0$.
If $\mathbf{p} \cdot \mathbf{1} = 0$, then
we obtain $\mathbf{p} = \mathbf{v}_1$ and
$p_1 = \mathbf{v}_1 \cdot \mathbf{e}_1 < \mathbf{v}_2 \cdot \mathbf{e}_1$,
which is a contradiction to the definition of $p_1$.
Therefore $\mathbf{p} \cdot \mathbf{1} > 0$,
i.e., $\mathbf{p} \in \mathcal{H}_+^d$.
Thus the lemma holds.
\end{proof}

Let $d$ be an integer with $d \geq 2$.
Let $\mathcal{F}^{d-1}_*$ be
the set of
the regular $(d-1)$-simplices in
the hyperplane $\mathcal{H}^d$ which are homothetic to $\triangle^{d-1}(\mathbf{1})$.
Let $f_*$ be a map from $\mathcal{H}_+^d$ to $\mathcal{F}^{d-1}_*$
defined by $f_*(\mathbf{p}) = \triangle^{d-1}(\mathbf{p})$.
By Lemma \ref{lem:delta} and Proposition~\ref{X1},
$\triangle^{d-1}(\mathbf{p}) \in \mathcal{F}^{d-1}_*$ and therefore
the map $f_*$ is well-defined.

\begin{Prop}\label{prop:bijection}
For each integer $d \geq 2$,
the map $f_* : \mathcal{H}_+^d \to \mathcal{F}^{d-1}_*$ is a bijection.
\end{Prop}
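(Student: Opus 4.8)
The plan is to show that $f_*$ is both injective and surjective, treating the two properties separately.

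\medskip

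\noindent\textbf{Surjectivity.} This is essentially already done. Given any $\Lambda \in \mathcal{F}^{d-1}_*$, by definition $\Lambda$ is a regular $(d-1)$-simplex in $\mathcal{H}^d$ homothetic to $\triangle^{d-1}(\mathbf{1})$, so Lemma~\ref{regsim} produces a point $\mathbf{p} \in \mathcal{H}_+^d$ with $\Lambda = \triangle^{d-1}(\mathbf{p}) = f_*(\mathbf{p})$. Hence $f_*$ is onto.

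\medskip

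\noindent\textbf{Injectivity.} Suppose $\mathbf{p}, \mathbf{q} \in \mathcal{H}_+^d$ satisfy $\triangle^{d-1}(\mathbf{p}) = \triangle^{d-1}(\mathbf{q})$. I would recover $\mathbf{p}$ from the simplex $\triangle^{d-1}(\mathbf{p})$ by an intrinsic formula and likewise for $\mathbf{q}$, forcing $\mathbf{p} = \mathbf{q}$. The cleanest route uses the vertex description from Lemma~\ref{lem:delta}: the vertices of $\triangle^{d-1}(\mathbf{p})$ are $\mathbf{v}_i = \mathbf{p} - (\mathbf{p}\cdot\mathbf{1})\mathbf{e}_i$ for $i = 1,\ldots,d$. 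Summing these, $\sum_{i=1}^d \mathbf{v}_i = d\mathbf{p} - (\mathbf{p}\cdot\mathbf{1})\mathbf{1}$, and since $\mathbf{p}\cdot\mathbf{1} > 0$ while each $\mathbf{v}_i$ lies in $\mathcal{H}^d$, one checks $\mathbf{v}_i \cdot \mathbf{e}_i < \mathbf{v}_j \cdot \mathbf{e}_i$ for $j \neq i$ exactly as in the proof of Lemma~\ref{regsim}; this inequality tells us which vertex of the simplex plays the role of $\mathbf{v}_i$ (it is the unique vertex minimizing the $i$th coordinate), so the labeling $\mathbf{v}_1,\ldots,\mathbf{v}_d$ is determined by the simplex alone. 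Then the formula $p_i = \max\{\mathbf{v}_k\cdot\mathbf{e}_i \mid 1 \le k \le d\}$ (equivalently $\mathbf{p} = \frac{1}{d}\sum_i \mathbf{v}_i + \frac{1}{d}(\mathbf{p}\cdot\mathbf{1})\mathbf{1}$, where $\mathbf{p}\cdot\mathbf{1}$ is $1/\sqrt{2d}$ times the known edge length) recovers $\mathbf{p}$ uniquely from $\triangle^{d-1}(\mathbf{p})$. Applying the same recovery to $\mathbf{q}$ and using $\triangle^{d-1}(\mathbf{p}) = \triangle^{d-1}(\mathbf{q})$ yields $\mathbf{p} = \mathbf{q}$.

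\medskip

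\noindent The main point requiring care is the claim that the labeling of the vertices is canonically determined by the simplex: a priori a regular simplex has many symmetries, but the constraint that $\mathbf{v}_j - \mathbf{v}_i$ be a \emph{positive} scalar multiple of $\mathbf{e}_i - \mathbf{e}_j$ (Lemma~\ref{fixed}, inherited via homothety from $\triangle^{d-1}(\mathbf{1})$) breaks all of them, pinning down each $\mathbf{v}_i$ as the vertex with smallest $i$th coordinate. Once that is in place the rest is the bookkeeping already carried out in Lemmas~\ref{lem:delta} and~\ref{regsim}, so I would organize the write-up as: (i) invoke Lemma~\ref{regsim} for surjectivity; (ii) for injectivity, note the vertex set of $f_*(\mathbf{p})$ determines the ordered tuple $(\mathbf{v}_1,\ldots,\mathbf{v}_d)$ by the positivity/minimal-coordinate criterion; (iii) recover $\mathbf{p}$ from that tuple by the explicit formula, concluding $\mathbf{p} = \mathbf{q}$.
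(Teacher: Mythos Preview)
Your proof is correct; surjectivity via Lemma~\ref{regsim} matches the paper exactly. For injectivity you and the paper diverge slightly in execution: the paper simply observes that two invariants of the simplex---its centroid $\frac{1}{d}\sum_i \mathbf{v}_i = \mathbf{p} - \frac{1}{d}(\mathbf{p}\cdot\mathbf{1})\mathbf{1}$ and its edge length $\sqrt{2}(\mathbf{p}\cdot\mathbf{1})$---together determine $\mathbf{p}$ in two lines, without ever labeling individual vertices. You instead reconstruct $\mathbf{p}$ coordinatewise via $p_i = \max_k(\mathbf{v}_k\cdot\mathbf{e}_i)$, which is essentially running the construction in the proof of Lemma~\ref{regsim} in reverse; this is fine but costs the extra step of arguing that the vertex labeling is canonical. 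Your parenthetical alternative (centroid plus a multiple of $\mathbf{1}$ read off from the edge length) is in fact precisely the paper's argument---though note a small slip there: since the edge length equals $\sqrt{2}(\mathbf{p}\cdot\mathbf{1})$, one has $\mathbf{p}\cdot\mathbf{1}$ equal to the edge length divided by $\sqrt{2}$, not by $\sqrt{2d}$ (the factor $1/\sqrt{2d}$ is the ratio of the centroid-to-$\mathbf{p}$ distance to the edge length).
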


\begin{proof}
By Lemma \ref{regsim}, the map $f_*$ is surjective.
Suppose that $\triangle^{d-1}(\mathbf{p}) = \triangle^{d-1}(\mathbf{q})$.
Since the centers of $\triangle^{d-1}(\mathbf{p})$ and $\triangle^{d-1}(\mathbf{q})$
are the same, we have
$\mathbf{p} - \frac{1}{d}(\mathbf{p} \cdot \mathbf{1}) \mathbf{1}
= \mathbf{q} - \frac{1}{d}(\mathbf{q} \cdot \mathbf{1}) \mathbf{1}$.
Since the lengths of edges of $\triangle^{d-1}(\mathbf{p})$ and $\triangle^{d-1}(\mathbf{q})$
are the same, we have
$\sqrt{2}(\mathbf{p} \cdot \mathbf{1}) = \sqrt{2}(\mathbf{q} \cdot \mathbf{1})$.
Therefore, we have $\mathbf{p} = \mathbf{q}$.
Thus the map $f_*$ is injective.
Hence the map $f_*$ is a bijection.
\end{proof}

Let $\mathcal{F}^{d-1}$ be the set of the interiors of the regular $(d-1)$-simplices in
the hyperplane $\mathcal{H}^d$ which are homothetic to $\triangle^{d-1}(\mathbf{1})$.
Then there is a clear bijection $\varphi: \mathcal{F}_*^{d-1} \rightarrow \mathcal{F}^{d-1}$
such that for each element in $\mathcal{F}_*^{d-1}$, its $\varphi$-value is its interior.
Therefore we obtain the following corollary.

\begin{Cor}\label{cor:bijection}
For each integer $d \geq 2$,
the map $\varphi \circ f_* : \mathcal{H}_+^d \to \mathcal{F}^{d-1}$ is a bijection.
\end{Cor}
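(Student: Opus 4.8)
The plan is simply to note that $\varphi \circ f_*$ is a composition of two bijections and hence a bijection. Proposition~\ref{prop:bijection} already establishes that $f_* : \mathcal{H}_+^d \to \mathcal{F}_*^{d-1}$ is a bijection, so all that remains is to check that the passage-to-interior map $\varphi : \mathcal{F}_*^{d-1} \to \mathcal{F}^{d-1}$ is a bijection. (Here ``interior'' must be read as relative interior, since every element of $\mathcal{F}_*^{d-1}$ lies in the $(d-1)$-dimensional hyperplane $\mathcal{H}^d$.)

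First I would observe that $\varphi$ is surjective, which is immediate from the definition of $\mathcal{F}^{d-1}$ as the set of relative interiors of the members of $\mathcal{F}_*^{d-1}$. Next I would verify injectivity: each $\Lambda \in \mathcal{F}_*^{d-1}$ is a compact convex set with nonempty relative interior, and a standard fact about convex bodies says that such a set equals the closure of its relative interior. Thus $\Lambda$ can be recovered from $\varphi(\Lambda)$ by taking closure, so $\varphi(\Lambda) = \varphi(\Lambda')$ implies $\Lambda = \Lambda'$. Hence $\varphi$ is a bijection, and consequently so is $\varphi \circ f_*$.

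I do not expect any genuine obstacle here; the corollary is essentially a bookkeeping consequence of Proposition~\ref{prop:bijection}. The only mildly technical ingredient is the recovery of a convex body from its relative interior via closure, and even that was already implicitly invoked in the remark preceding the statement, where $\varphi$ is called a ``clear bijection.''
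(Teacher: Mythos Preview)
Your proposal is correct and matches the paper's approach exactly: the paper does not even give a separate proof, simply noting before the corollary that $\varphi$ is a ``clear bijection'' and that the result therefore follows from Proposition~\ref{prop:bijection}. Your added justification that a compact convex body is the closure of its relative interior makes explicit what the paper leaves as an obvious remark.
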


\subsection{A characterization of the competition graphs of $d$-partial orders}

Let $A^{d-1}(\mathbf{p})$ be the interior of the regular simplex $\triangle^{d-1}(\mathbf{p})$,
i.e., $A^{d-1}(\mathbf{p}) := \text{int}(\triangle^{d-1}(\mathbf{p}))$. Then
\[
A^{d-1}(\mathbf{p}) = \{\mathbf{x} \in \mathcal{H}^d \mid \mathbf{x} \prec \mathbf{p} \}.
\]

\begin{Prop}\label{X3}
For $\mathbf{p},\mathbf{q} \in \mathcal{H}_+^d$,
$\triangle^{d-1}(\mathbf{p})$ is
contained in $A^{d-1}(\mathbf{q})$
if and only if $\mathbf{p} \prec \mathbf{q}$.
\end{Prop}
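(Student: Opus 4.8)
The plan is to prove both implications directly from the two explicit descriptions available to us: the vertex description $\triangle^{d-1}(\mathbf{p}) = \text{Conv}(\mathbf{v}_1, \ldots, \mathbf{v}_d)$ with $\mathbf{v}_i = \mathbf{p} - (\mathbf{p}\cdot\mathbf{1})\mathbf{e}_i$ from Lemma~\ref{lem:delta}, and the open-cone description $A^{d-1}(\mathbf{q}) = \{\mathbf{x} \in \mathcal{H}^d \mid \mathbf{x} \prec \mathbf{q}\}$ stated just above. Since containment of a convex hull in a convex set is equivalent to containment of the generating vertices, the condition $\triangle^{d-1}(\mathbf{p}) \subseteq A^{d-1}(\mathbf{q})$ is equivalent to $\mathbf{v}_i \prec \mathbf{q}$ for every $i = 1, \ldots, d$. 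So the whole statement reduces to the claim that $\mathbf{v}_i \prec \mathbf{q}$ for all $i$ if and only if $\mathbf{p} \prec \mathbf{q}$.

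For the forward direction, I would first note that taking $i = 1$ and looking at, say, the $j$th coordinate for $j \neq 1$ gives $p_j < q_j$ (since $\mathbf{v}_1$ and $\mathbf{p}$ agree in coordinate $j$); ranging over all choices of which index is ``removed,'' every coordinate inequality $p_k < q_k$ appears, so $\mathbf{p} \prec \mathbf{q}$. Conversely, suppose $\mathbf{p} \prec \mathbf{q}$. For coordinate $k \neq i$ we have $(\mathbf{v}_i)_k = p_k < q_k$ immediately. For coordinate $k = i$ we have $(\mathbf{v}_i)_i = p_i - (\mathbf{p}\cdot\mathbf{1})$, and since $\mathbf{p} \in \mathcal{H}_+^d$ means $\mathbf{p}\cdot\mathbf{1} > 0$, this is strictly less than $p_i < q_i$. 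Hence $\mathbf{v}_i \prec \mathbf{q}$ for every $i$, and therefore $\triangle^{d-1}(\mathbf{p}) = \text{Conv}(\mathbf{v}_1,\ldots,\mathbf{v}_d) \subseteq A^{d-1}(\mathbf{q})$, using that $A^{d-1}(\mathbf{q})$ is convex (it is the intersection of the hyperplane with an open cone cut out by the half-spaces $x_i < q_i$).

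I do not expect a genuine obstacle here; the only point requiring a little care is the reduction from ``the simplex is contained in $A^{d-1}(\mathbf{q})$'' to ``each vertex $\mathbf{v}_i$ lies in $A^{d-1}(\mathbf{q})$.'' One direction of that reduction is trivial (vertices belong to the simplex), and the other uses convexity of $A^{d-1}(\mathbf{q})$ together with the fact that it is an open set, so a convex combination of its points again lies in it. It is worth stating this convexity explicitly rather than leaving it implicit, since $A^{d-1}(\mathbf{q})$ is defined as an interior and one should confirm it coincides with the relatively open polytope $\{\mathbf{x}\in\mathcal{H}^d \mid \mathbf{x}\prec\mathbf{q}\}$ already recorded in the display preceding the proposition. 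With that in hand the argument is a short coordinate computation.
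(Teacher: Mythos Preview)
Your proof is correct and follows essentially the same line as the paper's. The only difference is in the direction $\mathbf{p} \prec \mathbf{q} \Rightarrow \triangle^{d-1}(\mathbf{p}) \subseteq A^{d-1}(\mathbf{q})$: rather than checking vertices and invoking convexity of $A^{d-1}(\mathbf{q})$, the paper uses the closed-cone description of $\triangle^{d-1}(\mathbf{p})$ directly (any $\mathbf{a}\in\triangle^{d-1}(\mathbf{p})$ satisfies $a_k \le p_k < q_k$ for all $k$), which sidesteps the convexity point you flagged as the one place needing care.
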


\begin{proof}
Suppose that $\mathbf{p}\prec \mathbf{q}$.
Take a point $\mathbf{a}$ in $\triangle^{d-1}(\mathbf{p})$.
Then $a_k \leq p_k$ for each $k=1, \ldots, d$.
By the assumption that $\mathbf{p}\prec \mathbf{q}$, $a_k < q_k$ for each $k=1, \ldots, d$,
that is, $\mathbf{a} \prec \mathbf{q}$.
Thus $\triangle^{d-1}(\mathbf{p})$ is contained in $A^{d-1}(\mathbf{q})$.

Suppose that $\triangle^{d-1}(\mathbf{p})$ is
contained in $A^{d-1}(\mathbf{q})$.
Then, by Lemma~\ref{lem:delta},
$\mathbf{p} - (\mathbf{p} \cdot \mathbf{1}) \mathbf{e}_i$ $(i=1, \ldots, d)$
are points in $A^{d-1}(\mathbf{q})$.
By the definition of $A^{d-1}(\mathbf{q})$, we have
$\mathbf{p} - (\mathbf{p} \cdot \mathbf{1}) \mathbf{e}_i \prec \mathbf{q}$ $(i=1, \ldots, d)$,
which implies $\mathbf{p} \prec \mathbf{q}$.
\end{proof}

\begin{Lem}\label{lem:adj}
Let $d$ be a positive integer and let $D$ be a $d$-partial order.
Then, two vertices $\mathbf{v}$ and $\mathbf{w}$ of $D$ are adjacent in the competition graph of $D$
if and only if there exists a vertex $\mathbf{a}$ in $D$ such that
$\triangle^{d-1}(\mathbf{a}) \subseteq A^{d-1}(\mathbf{v}) \cap A^{d-1}(\mathbf{w})$.
\end{Lem}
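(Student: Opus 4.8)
\emph{Proof plan.} At bottom the statement is a reformulation of Proposition~\ref{X3}, so the plan is to unwind both sides of the claimed equivalence into statements about the coordinatewise order $\prec$ and then quote that proposition. Since $\triangle^{d-1}(\cdot)$ and $A^{d-1}(\cdot)$ are only defined on $\mathcal{H}_+^d$ and the geometry invoked requires $d\ge 2$ (the standing hypothesis of this section), I would open by noting that we may assume the vertex set $S$ of $D$ is a finite subset of $\mathcal{H}_+^d$: translating $S$ along the all-one vector $\mathbf{1}$ by a sufficiently large positive multiple $c$ sends each $\mathbf{s}\in S$ to $\mathbf{s}+c\mathbf{1}$ with $(\mathbf{s}+c\mathbf{1})\cdot\mathbf{1}=\mathbf{s}\cdot\mathbf{1}+cd>0$, and since $\mathbf{s}\mapsto\mathbf{s}+c\mathbf{1}$ is an order-isomorphism it is a digraph isomorphism $D_S\to D_{S+c\mathbf{1}}$, changing neither $D$ up to isomorphism nor its competition graph. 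Hereafter $\mathbf{v},\mathbf{w},\mathbf{a}$ range over $S$.

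Next I would unwind the left-hand side. By the definition of the competition graph together with the definition of a $d$-partial order, $\mathbf{v}$ and $\mathbf{w}$ are adjacent in $C(D)$ if and only if there is a vertex $\mathbf{a}\in S$ with $(\mathbf{v},\mathbf{a})\in A(D)$ and $(\mathbf{w},\mathbf{a})\in A(D)$, that is, with $\mathbf{a}\prec\mathbf{v}$ and $\mathbf{a}\prec\mathbf{w}$. On the right-hand side, for a fixed $\mathbf{a}$ the containment $\triangle^{d-1}(\mathbf{a})\subseteq A^{d-1}(\mathbf{v})\cap A^{d-1}(\mathbf{w})$ is simply the conjunction of $\triangle^{d-1}(\mathbf{a})\subseteq A^{d-1}(\mathbf{v})$ and $\triangle^{d-1}(\mathbf{a})\subseteq A^{d-1}(\mathbf{w})$.

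The two sides are then matched by Proposition~\ref{X3}: applied to the pair $(\mathbf{a},\mathbf{v})$ it gives $\triangle^{d-1}(\mathbf{a})\subseteq A^{d-1}(\mathbf{v})\iff\mathbf{a}\prec\mathbf{v}$, and likewise with $\mathbf{w}$ in place of $\mathbf{v}$. Hence, for each fixed $\mathbf{a}\in S$, the statement ``$\mathbf{a}\prec\mathbf{v}$ and $\mathbf{a}\prec\mathbf{w}$'' is equivalent to ``$\triangle^{d-1}(\mathbf{a})\subseteq A^{d-1}(\mathbf{v})\cap A^{d-1}(\mathbf{w})$''. Since this is a pointwise equivalence in $\mathbf{a}$, prefixing ``there exists $\mathbf{a}\in S$'' to both sides preserves it, and combining with the unwinding of the previous paragraph yields the asserted biconditional.

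I do not expect a genuine obstacle: once Proposition~\ref{X3} is available the argument is a chain of trivial equivalences, and the only points to get right are (i) arranging $S\subseteq\mathcal{H}_+^d$ so that the statement is well posed, and (ii) keeping the existential quantifier over $\mathbf{a}$ outermost, so that the pointwise equivalence is applied before quantifying and the \emph{same} vertex $\mathbf{a}$ witnesses both sides. If one prefers to avoid citing Proposition~\ref{X3}, the two directions can be done directly from the description $A^{d-1}(\mathbf{v})=\{\mathbf{x}\in\mathcal{H}^d\mid\mathbf{x}\prec\mathbf{v}\}$ together with the vertex list $\mathbf{a}-(\mathbf{a}\cdot\mathbf{1})\mathbf{e}_i$ of $\triangle^{d-1}(\mathbf{a})$ from Lemma~\ref{lem:delta}; this is the only place where $d\ge 2$ is essential, since recovering $a_j<v_j$ from the vertices requires an index $i\ne j$.
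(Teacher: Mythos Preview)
Your proposal is correct and follows essentially the same route as the paper: unwind the definition of adjacency in the competition graph to the existence of a common out-neighbor $\mathbf{a}$ with $\mathbf{a}\prec\mathbf{v}$ and $\mathbf{a}\prec\mathbf{w}$, then apply Proposition~\ref{X3} twice to translate this into the simplex containment. Your additional care in first translating $S$ into $\mathcal{H}_+^d$ so that $\triangle^{d-1}(\cdot)$ and $A^{d-1}(\cdot)$ are well-defined is a point the paper glosses over here and only addresses later in the proof of Theorem~\ref{thm:intersectiongeneral}.
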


\begin{proof}
By definition,
two vertices $\mathbf{v}$ and $\mathbf{w}$ are adjacent in the competition graph of $D$
if and only if
there exists a vertex $\mathbf{a}$ in $D$ such that $\mathbf{a} \prec \mathbf{v}$
and $\mathbf{a} \prec \mathbf{w}$.
By Proposition~\ref{X3},
$\mathbf{a} \prec \mathbf{v}$
and $\mathbf{a} \prec \mathbf{w}$ holds
if and only if
$\triangle^{d-1}(\mathbf{a}) \subseteq A^{d-1}(\mathbf{v})$
and $\triangle^{d-1}(\mathbf{a}) \subseteq A^{d-1}(\mathbf{w})$,
that is, $\triangle^{d-1}(\mathbf{a}) \subseteq A^{d-1}(\mathbf{v}) \cap A^{d-1}(\mathbf{w})$.
Thus the lemma holds.
\end{proof}

The following result extends Theorems~\ref{dpo} and \ref{interval}.

\begin{Thm}\label{thm:intersectiongeneral}
Let $G$ be a graph and let $d$ be an integer with $d \geq 2$.
Then, $G$ is the competition graph of a $d$-partial order
if and only if
there exists a family $\mathcal{F}$ of the interiors of regular $(d-1)$-simplices in $\mathbb{R}^d$
which are
contained in the hyperplane $\mathcal{H}^d$ and homothetic to
$A^{d-1}(\mathbf{1})$
and there exists a one-to-one correspondence between the vertex set of $G$ and $\mathcal{F}$
such that
\begin{itemize}
\item[{\rm ($\star$)}]
two vertices $v$ and $w$ are adjacent in $G$ if and only if
two elements in $\mathcal{F}$ corresponding to $v$ and $w$
have the intersection containing the closure of another element in $\mathcal{F}$.
\end{itemize}
\end{Thm}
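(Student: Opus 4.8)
The plan is to reduce Theorem~\ref{thm:intersectiongeneral} to the already-established machinery of Section~2, using the bijection $\varphi\circ f_*$ of Corollary~\ref{cor:bijection} together with the adjacency criterion of Lemma~\ref{lem:adj}. I would argue the two directions separately, in each case transporting the combinatorial data across the bijection between $\mathcal{H}^d_+$ and $\mathcal{F}^{d-1}$.

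For the forward direction, suppose $G$ is the competition graph of a $d$-partial order $D$. By definition of a $d$-partial order there is a finite set $S\subseteq\mathbb{R}^d$ with $D\cong D_S$; I would like to assume $S\subseteq\mathcal{H}^d_+$. This requires a normalization step: translating $S$ along the all-one direction $\mathbf{1}$ does not change the partial order $\prec$ (since $\mathbf{x}\prec\mathbf{y}$ is preserved by adding the same vector to both), so after translating by a large enough multiple of $\mathbf{1}$ we may assume every point of $S$ lies in $\mathcal{H}^d_+$. Now define $\mathcal{F}:=\{A^{d-1}(\mathbf{v})\mid \mathbf{v}\in S\}=\{(\varphi\circ f_*)(\mathbf{v})\mid\mathbf{v}\in S\}$. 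By Corollary~\ref{cor:bijection} the assignment $\mathbf{v}\mapsto A^{d-1}(\mathbf{v})$ is injective on $\mathcal{H}^d_+$, so composing with the vertex-labelling of $D$ by $S$ gives a one-to-one correspondence between $V(G)$ and $\mathcal{F}$; each member of $\mathcal{F}$ is the interior of a regular $(d-1)$-simplex in $\mathcal{H}^d$ homothetic to $\triangle^{d-1}(\mathbf{1})$, hence to $A^{d-1}(\mathbf{1})$, by Proposition~\ref{X1}. Property~($\star$) is then exactly the content of Lemma~\ref{lem:adj}: $v\sim w$ in $G$ iff there is $\mathbf{a}\in S$ with $\triangle^{d-1}(\mathbf{a})\subseteq A^{d-1}(\mathbf{v})\cap A^{d-1}(\mathbf{w})$, and $\triangle^{d-1}(\mathbf{a})$ is precisely the closure of the element $A^{d-1}(\mathbf{a})\in\mathcal{F}$ corresponding to $\mathbf{a}$.

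For the converse, suppose such a family $\mathcal{F}$ and correspondence exist. By Proposition~\ref{prop:bijection} (equivalently Corollary~\ref{cor:bijection}), every element of $\mathcal{F}$ is of the form $A^{d-1}(\mathbf{p})$ for a unique $\mathbf{p}\in\mathcal{H}^d_+$; let $S\subseteq\mathcal{H}^d_+$ be the (finite) set of these preimages, one for each vertex of $G$, and form the $d$-partial order $D_S$. The correspondence $\mathcal{F}\to S$ followed by the inverse correspondence $V(G)\to\mathcal{F}$ identifies $V(G)$ with $V(D_S)=S$. I then need: for $v,w\in V(G)$ with corresponding points $\mathbf{v},\mathbf{w}\in S$, $v\sim w$ in $G$ iff $\mathbf{v},\mathbf{w}$ compete in $D_S$. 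Unwinding ($\star$): $v\sim w$ iff $A^{d-1}(\mathbf{v})\cap A^{d-1}(\mathbf{w})$ contains the closure $\overline{A^{d-1}(\mathbf{a})}=\triangle^{d-1}(\mathbf{a})$ of some third element of $\mathcal{F}$, i.e.\ of some $\mathbf{a}\in S$; by Lemma~\ref{lem:adj} this is exactly the statement that $\mathbf{v}$ and $\mathbf{w}$ are adjacent in $C(D_S)$. Hence $G\cong C(D_S)$, so $G$ is the competition graph of a $d$-partial order. (Here I am using that the closure of $A^{d-1}(\mathbf{a})=\mathrm{int}(\triangle^{d-1}(\mathbf{a}))$ is $\triangle^{d-1}(\mathbf{a})$, since a nondegenerate simplex is the closure of its interior.)

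The only genuinely nontrivial point is the normalization in the forward direction — that the given realizing set $S$ can be moved into the open half-space $\mathcal{H}^d_+$ without disturbing the digraph — and the matching observation that the construction of $\mathcal{F}$ lands inside the prescribed family; everything else is a bookkeeping translation through the bijection of Corollary~\ref{cor:bijection} and two invocations of Lemma~\ref{lem:adj}. I would also remark explicitly at the end that specializing $d=2$ recovers the interval-graph correspondence behind Theorems~\ref{dpo} and~\ref{interval}, since interiors of regular $1$-simplices in $\mathcal{H}^2$ homothetic to $A^1(\mathbf{1})$ are exactly the bounded open intervals of a line.
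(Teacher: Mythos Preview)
Your proposal is correct and follows essentially the same approach as the paper: translate the realizing set into $\mathcal{H}^d_+$, transport through the bijection of Corollary~\ref{cor:bijection}, and read off ($\star$) from Lemma~\ref{lem:adj} in both directions. The only cosmetic difference is that in the ``if'' part you form $D_S$ directly (which is a $d$-partial order by definition) and invoke Lemma~\ref{lem:adj}, whereas the paper first defines the arc set via the containment $\triangle^{d-1}(\mathbf{q})\subseteq A^{d-1}(\mathbf{p})$ and then appeals to Proposition~\ref{X3} to identify it with $D_S$; the two are equivalent.
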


\begin{proof}
First we show the ``only if" part.
Let $D$ be a $d$-partial order
and let $G$ be the competition graph of $D$.
Without loss of generality, we may assume that $V(D) \subseteq \mathcal{H}^d_+$
by translating each of the vertices of $D$ in the same direction and by the same amount
since the competition graph of $D$ is determined only by the adjacency among vertices of $D$.
Consequently $A^{d-1}(\mathbf{v}) \neq \emptyset$ for each vertex $\mathbf{v}$ of $D$.
Let
$\mathcal{F}=\{A^{d-1}(\mathbf{v}) \mid \mathbf{v} \in V(D)\}$
and let $f:V(G) \to \mathcal{F}$
be the map defined by
$f(\mathbf{v}) = A^{d-1}(\mathbf{v})$ for $\mathbf{v} \in V(D)$.
Note that $\mathcal{F} \subseteq \mathcal{F}^{d-1}$.
Since the map $f:V(G) \to \mathcal{F}$
is a restriction of the map $\varphi \circ f_*:V(G) \to \mathcal{F}^{d-1}$,
it follows from Corollary~\ref{cor:bijection} that
$f$ is a bijection.
By Lemma \ref{lem:adj}, the condition ($\star$) holds.

Second, we show the ``if" part.
Suppose that there exist a family $\mathcal{F}  \subseteq \mathcal{F}^{d-1}$
and a bijection $f:V(G) \to \mathcal{F}$ such that the condition ($\star$) holds.
By Corollary~\ref{cor:bijection},
each element in $\mathcal{F}$ can be represented as $A^{d-1}(\mathbf{p})$
for some $\mathbf{p} \in \mathcal{H}^d_+$.
Let $D$ be a digraph with vertex set
$V(D)=\{\mathbf{p} \in \mathbb{R}^d \mid A^{d-1}(\mathbf{p}) \in \mathcal{F}\}$
and arc set $A(D)=\{(\mathbf{p}, \mathbf{q}) \mid \mathbf{p}, \mathbf{q} \in V(D),
\mathbf{p} \neq \mathbf{q}, \triangle^{d-1}(\mathbf{q}) \subseteq A^{d-1}(\mathbf{p}) \}$.
By Proposition~\ref{X3}, $(\mathbf{p},\mathbf{q}) \in A(D)$ if and only if $\mathbf{q} \prec \mathbf{p}$,
so $D$ is a $d$-partial order.
Now, take two vertices $v$ and $w$ in $G$.
Then, by the hypothesis and above argument, $v$ and $w$ correspond to some points $\mathbf{p}$ and $\mathbf{q}$
in $\mathbb{R}^d$, respectively,
so that $v$ and $w$ are adjacent if and only if
both $A^{d-1}(\mathbf{p})$ and $A^{d-1}(\mathbf{q})$ contain the closure of an element in $\mathcal{F}$,
that is, $\triangle^{d-1}(\mathbf{r})$ for some $\mathbf{r} \in \mathbb{R}^d$.
By the definition of $D$, $(\mathbf{p},\mathbf{r}) \in A(D)$ and $(\mathbf{q},\mathbf{r}) \in A(D)$.
Consequently, $v$ and $w$ are adjacent if and only if
the corresponding vertices $\mathbf{p}$ and $\mathbf{q}$ have a common out-neighbor in $D$.
Hence $G$ is the competition graph of the $d$-partial order $D$.
\end{proof}

\subsection{Intersection graphs and the competition graphs of $d$-partial orders}

\begin{Thm}\label{thm:open}
If $G$ is the intersection graph of
a finite family of homothetic open regular $(d-1)$-simplices,
then $G$ together with sufficiently many new isolated vertices
is the competition graph of a $d$-partial order.
\end{Thm}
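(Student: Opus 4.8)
The plan is to deduce the statement from Theorem~\ref{thm:intersectiongeneral}. Starting from the given intersection representation of $G$ by homothetic open regular $(d-1)$-simplices, I would first move the simplices into the hyperplane $\mathcal{H}^d$ so that each becomes a homothetic copy of $A^{d-1}(\mathbf{1})$, and then enlarge the family by a collection of very small \emph{witness} simplices, one for every edge of $G$, placed so that the closure of the witness for $vw$ lies inside the intersection of the two simplices assigned to $v$ and $w$. These witnesses will account for the new isolated vertices, and keeping them uniformly small is what prevents them from creating any unwanted edge in the competition graph. (We work with $d \ge 2$, the standing assumption of this section.)

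\emph{Normalizing the representation.} Let $\{T_v \mid v \in V(G)\}$ be the given finite family of pairwise homothetic open regular $(d-1)$-simplices, so that $T_v \cap T_w \ne \emptyset$ exactly when $vw \in E(G)$. Homothetic figures are parallel, so the affine hulls of the $T_v$ are parallel $(d-1)$-dimensional subspaces, and two of the simplices can meet only when their affine hulls coincide; hence, translating the finitely many coplanar groups far apart inside a single $(d-1)$-plane $\Pi$, I may assume all $T_v \subseteq \Pi$ with the intersection graph still equal to $G$. Then I would fix $v_0 \in V(G)$ and a similarity transformation $\sigma$ of $\Pi$ onto $\mathcal{H}^d$ that carries $T_{v_0}$ onto $A^{d-1}(\mathbf{1})$; such a $\sigma$ exists because any two regular $(d-1)$-simplices are similar. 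A similarity maps homothetic figures to homothetic figures, and each $T_v$ is a homothetic image of $T_{v_0}$, so $S_v := \sigma(T_v)$ is a homothetic copy of $A^{d-1}(\mathbf{1})$ lying in $\mathcal{H}^d$; since $\sigma$ is a bijection, the family $\{S_v\}$ still realizes $G$. By Corollary~\ref{cor:bijection}, $S_v = A^{d-1}(\mathbf{p}_v)$ for a unique $\mathbf{p}_v \in \mathcal{H}_+^d$.

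\emph{Adding witnesses and verifying $(\star)$.} Write $\ell(K)$ for the edge length of a regular simplex $K$ and set $m := \min_v \ell(S_v) > 0$. For each edge $vw$, the set $S_v \cap S_w$ is nonempty and open in $\mathcal{H}^d$, hence contains a ball, hence contains the closure of a homothetic open regular $(d-1)$-simplex; contracting such a simplex toward an interior point, its edge length can be made any sufficiently small positive number. As there are finitely many edges, I would fix a single $\varepsilon$ with $0 < \varepsilon < m$ together with, for each $vw \in E(G)$, a witness $W_{vw} = A^{d-1}(\mathbf{r}_{vw})$ of edge length $\varepsilon$ satisfying $\triangle^{d-1}(\mathbf{r}_{vw}) \subseteq S_v \cap S_w$; a small perturbation makes the $W_{vw}$ pairwise distinct and distinct from every $S_v$. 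Put $\mathcal{F} := \{S_v \mid v \in V(G)\} \cup \{W_{vw} \mid vw \in E(G)\}$, let $G'$ be $G$ with one new isolated vertex $x_{vw}$ added for each $vw \in E(G)$, and let $V(G') \to \mathcal{F}$ be the bijection $v \mapsto S_v$, $x_{vw} \mapsto W_{vw}$. The verification of $(\star)$ rests on the elementary fact that if the closure of a regular $(d-1)$-simplex $Z$ is contained in an open regular $(d-1)$-simplex $Y$, then $\ell(Z) < \ell(Y)$ (a compact subset of a bounded open set $U$ has diameter strictly smaller than that of $\overline{U}$, and the diameter of a regular $(d-1)$-simplex is its edge length). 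Note that every member of $\mathcal{F}$ has edge length at least $\varepsilon$ — namely $\varepsilon$ for the witnesses and more than $\varepsilon$ for the $S_v$. Now suppose $Y_1, Y_2, Z \in \mathcal{F}$ are distinct with $\overline{Z} \subseteq Y_1 \cap Y_2$; then $\ell(Z) < \ell(Y_1)$ and $\ell(Z) < \ell(Y_2)$, so, as $\ell(Z) \ge \varepsilon$, neither $Y_1$ nor $Y_2$ is a witness; hence $Y_1 = S_v$ and $Y_2 = S_w$ for some $v,w$, whence $S_v \cap S_w \ne \emptyset$ and $vw \in E(G)$. Consequently, in the graph associated with $\mathcal{F}$ via $(\star)$ every $x_{vw}$ is isolated and the adjacencies among the $S_v$ are precisely the edges of $G$; conversely, for $vw \in E(G)$ the witness $W_{vw}$ has $\overline{W_{vw}} \subseteq S_v \cap S_w$ and $W_{vw} \notin \{S_v, S_w\}$, so it realizes the edge $vw$. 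Hence $(\star)$ holds for $G'$ and $\mathcal{F}$, and Theorem~\ref{thm:intersectiongeneral} yields that $G'$ — that is, $G$ together with the isolated vertices $x_{vw}$ — is the competition graph of a $d$-partial order.

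I expect the main obstacle to be the last step: one must ensure the witnesses stay isolated, and this is exactly what the uniform bound $\varepsilon < m$ secures, since it forces any triple witnessing an adjacency to consist of two of the original simplices and a strictly smaller one, and two of the original simplices with nonempty intersection are already adjacent in $G$. The normalization step is routine, using that the intersection graph is a similarity invariant and that similarities send homotheties to homotheties.
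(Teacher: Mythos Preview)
Your proof is correct and follows essentially the same approach as the paper: reduce to Theorem~\ref{thm:intersectiongeneral} by adjoining, for each edge $vw$, a small witness simplex whose closure lies in $S_v\cap S_w$, and then verify condition~$(\star)$. The only differences are that you carry out the normalization into $\mathcal{H}^d$ explicitly (the paper leaves this step implicit) and you guarantee the witnesses remain isolated via the uniform size bound $\varepsilon<m$, whereas the paper accomplishes the same thing by choosing the witnesses $B_{ij}$ to be pairwise disjoint.
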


\begin{proof}
Let $\mathcal{A} = \{A_1, \ldots A_n \}$ be
a finite family of homothetic open regular $(d-1)$-simplices,
and let $G$ be the intersection graph of $\mathcal{A}$
with bijection $\phi:\mathcal{A} \to V(G)$.
For each distinct pair of $i$ and $j$ in $\{1, \ldots, n\}$
such that $A_i \cap A_j \neq \emptyset$,
let $B_{ij}$ be an open regular $(d-1)$-simplex homothetic $A_1$
such that the closure of $B_{ij}$ is contained in $A_i \cap A_j$.
We can take such $B_{ij}$ so that $B_{ij} \cap B_{i'j'} = \emptyset$
for distinct pairs $\{i,j\}$ and $\{i',j'\}$.
Let $\mathcal{B} = \{B_{ij} \mid i, j \in \{1, \ldots, n\},
i \neq j, A_i \cap A_j \neq \emptyset \}$.
Then the family $\mathcal{F} := \mathcal{A} \cup \mathcal{B}$
and a map $f: \mathcal{F} \to V(G) \cup \{ z_1, \ldots, z_{|\mathcal{B}|} \}$
such that $f|_{\mathcal{A}} = \phi$ and $f(\mathcal{B}) = \{ z_1, \ldots, z_{|\mathcal{B}|} \}$
satisfy the condition ($\star$) in Theorem \ref{thm:intersectiongeneral}.
Thus $G \cup \{ z_1, \ldots, z_{|\mathcal{B}|} \}$ is
the competition graph of a $d$-partial order.
\end{proof}

\begin{Lem}\label{lem:closed}
If $G$ is the intersection graph of
a finite family $\mathcal{F}$ of homothetic closed regular $(d-1)$-simplices,
then
$G$ is the intersection graph of
a finite family of homothetic open regular $(d-1)$-simplices.
\end{Lem}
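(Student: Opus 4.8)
The plan is to pass from closed simplices to open ones by a uniform shrinking that preserves the intersection pattern. Suppose $G$ is the intersection graph of a finite family $\mathcal{C} = \{C_1, \ldots, C_n\}$ of homothetic closed regular $(d-1)$-simplices lying in the hyperplane $\mathcal{H}^d$, with bijection $\psi : \mathcal{C} \to V(G)$. By Corollary~\ref{cor:bijection} and Proposition~\ref{X1} we may write $C_k = \triangle^{d-1}(\mathbf{p}_k)$ for uniquely determined points $\mathbf{p}_1, \ldots, \mathbf{p}_n \in \mathcal{H}_+^d$. For a small parameter $\varepsilon > 0$ I would associate to each $C_k$ the open simplex $A_k^\varepsilon := A^{d-1}((1+\varepsilon)\mathbf{p}_k)$; note $(1+\varepsilon)\mathbf{p}_k \in \mathcal{H}_+^d$, so each $A_k^\varepsilon$ is a homothetic open regular $(d-1)$-simplex in $\mathcal{H}^d$. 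Geometrically $A_k^\varepsilon$ is an $\varepsilon$-enlargement of $C_k$ about the common homothety center direction, and in particular $C_k \subseteq A_k^\varepsilon$ for every $\varepsilon > 0$, while $\bigcap_{\varepsilon > 0} A_k^\varepsilon = C_k$ (using $A^{d-1}(\mathbf{q}) = \{\mathbf{x} \in \mathcal{H}^d \mid \mathbf{x} \prec \mathbf{q}\}$ and that $(1+\varepsilon)\mathbf{p}_k \downarrow \mathbf{p}_k$ coordinatewise).

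The key step is to check that for all sufficiently small $\varepsilon$, the family $\{A_k^\varepsilon\}$ has exactly the same intersection graph as $\mathcal{C}$. One direction is immediate: if $C_i \cap C_j \neq \emptyset$ then $A_i^\varepsilon \cap A_j^\varepsilon \supseteq C_i \cap C_j \neq \emptyset$ for every $\varepsilon$. For the converse I would argue by contradiction: if there were a sequence $\varepsilon_m \to 0$ and a fixed pair $i,j$ with $C_i \cap C_j = \emptyset$ but $A_i^{\varepsilon_m} \cap A_j^{\varepsilon_m} \neq \emptyset$, pick $\mathbf{x}_m$ in each intersection; since the $A_k^{\varepsilon_m}$ are bounded (contained in $A_k^1$, say) we may pass to a convergent subsequence $\mathbf{x}_m \to \mathbf{x}_\infty$, and a standard compactness argument shows $\mathbf{x}_\infty \in \overline{A_i^{\varepsilon_{m_0}}}$ for every $m_0$, hence $\mathbf{x}_\infty \in \bigcap_{\varepsilon > 0}\overline{A_i^{\varepsilon}} = C_i$ and likewise $\mathbf{x}_\infty \in C_j$, contradicting $C_i \cap C_j = \emptyset$. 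Since there are only finitely many pairs $\{i,j\}$, one $\varepsilon$ works for all of them simultaneously.

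Having fixed such an $\varepsilon$, set $\mathcal{A} := \{A_1^\varepsilon, \ldots, A_n^\varepsilon\}$; this is a finite family of homothetic open regular $(d-1)$-simplices whose intersection graph is isomorphic to $G$ via $A_k^\varepsilon \mapsto \psi(C_k)$, which is exactly the conclusion. I expect the main obstacle to be the compactness/limit argument showing that no new intersections are created for small $\varepsilon$ — one has to be a little careful that the $\mathbf{x}_m$ stay in a fixed compact set and that the decreasing intersection of the closures $\overline{A_i^\varepsilon}$ really collapses to $C_i$; once the coordinatewise description $A^{d-1}(\mathbf{q}) = \{\mathbf{x} \in \mathcal{H}^d \mid \mathbf{x} \prec \mathbf{q}\}$ from Proposition~\ref{X3} and its surrounding discussion are in hand, this is routine but is the one place where care is needed. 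The translation back and forth between points $\mathbf{p}_k$ and simplices via Corollary~\ref{cor:bijection} is purely bookkeeping.
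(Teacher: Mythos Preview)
Your overall strategy---enlarge each closed simplex a little, take interiors, and argue that for small enough $\varepsilon$ the intersection pattern is unchanged---is exactly the paper's idea. But the specific enlargement you chose is flawed. Replacing $\mathbf{p}_k$ by $(1+\varepsilon)\mathbf{p}_k$ is a homothety of the whole hyperplane $\mathcal{H}^d$ about the origin, not an enlargement of each simplex about its own center. In particular, your claimed containment $C_k \subseteq A_k^\varepsilon$ need not hold: it requires $(p_k)_i < (1+\varepsilon)(p_k)_i$ for every coordinate $i$, which fails whenever some $(p_k)_i \le 0$, and membership in $\mathcal{H}_+^d$ only says the coordinates of $\mathbf{p}_k$ \emph{sum} to a positive number. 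Without that containment the ``intersecting pairs stay intersecting'' direction collapses; indeed, a global homothety about the origin cannot turn a boundary-only intersection of two closed simplices into an intersection of their interiors, since $(1+\varepsilon)C_k$ is exactly $\overline{A_k^\varepsilon}$.

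The fix is easy: replace $(1+\varepsilon)\mathbf{p}_k$ by $\mathbf{p}_k + \varepsilon\,\mathbf{1}$. Then $A_k^\varepsilon = \{\mathbf{x}\in\mathcal{H}^d : x_i < (p_k)_i + \varepsilon \text{ for all } i\}$, so $C_k \subseteq A_k^\varepsilon$ and $\bigcap_{\varepsilon>0}\overline{A_k^\varepsilon}=C_k$ are automatic, and your compactness argument goes through unchanged. (A computation of the center shows that adding $\varepsilon\,\mathbf{1}$ is precisely the enlargement about each simplex's own center.) The paper does the same thing but more directly: it enlarges each simplex about its center by a factor $1+\varepsilon$ and simply \emph{chooses} $\varepsilon$ to be one third of the minimum distance between disjoint pairs of closed simplices, which makes both directions immediate and avoids the sequential-compactness argument entirely.
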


\begin{proof}
Let $\mathcal{D} = \{\triangle_1, \ldots \triangle_n \}$ be
a finite family of homothetic closed regular $(d-1)$-simplices,
and let $G$ be the intersection graph of $\mathcal{D}$
with bijection $\phi:\mathcal{D} \to V(G)$.
Let
\[
\varepsilon = \frac{1}{3} \min \{ d( \triangle_i, \triangle_j) \mid
i, j \in \{1, \ldots, n\},
\triangle_i \cap \triangle_j = \emptyset \}
\]
where
$d( \triangle_i, \triangle_j) = \inf\{d({\mathbf x}, {\mathbf y})
\mid {\mathbf x} \in \triangle_i, {\mathbf y} \in \triangle_j \}$.

We make each simplex in $\mathcal{D}$ $(1+\varepsilon)$ times bigger
while the center of each simplex is fixed.
Then we take the interiors of these closed simplices.
By the choice of $\varepsilon$, the graph $G$ is
the intersection graph of the family of newly obtained open simplices.
Hence the lemma holds.
\end{proof}

\begin{Thm}\label{thm:closed}
If $G$ is the intersection graph of
a finite family $\mathcal{F}$ of homothetic closed regular $(d-1)$-simplices,
then $G$ together with sufficiently many new isolated vertices
is the competition graph of a $d$-partial order.
\end{Thm}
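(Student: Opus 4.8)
The plan is to deduce Theorem~\ref{thm:closed} immediately by chaining Lemma~\ref{lem:closed} with Theorem~\ref{thm:open}. First I would observe that the hypothesis of Theorem~\ref{thm:closed} is precisely the hypothesis of Lemma~\ref{lem:closed}: $G$ is the intersection graph of a finite family $\mathcal{F}$ of homothetic closed regular $(d-1)$-simplices. Applying Lemma~\ref{lem:closed}, I obtain that $G$ is also the intersection graph of some finite family $\mathcal{F}'$ of homothetic \emph{open} regular $(d-1)$-simplices. This is the step that does the real geometric work — enlarging each closed simplex by a factor $(1+\varepsilon)$ about its center, with $\varepsilon$ chosen small enough (one third of the minimum positive pairwise distance) that no new intersections are created and no existing intersections are destroyed — but it has already been carried out in the proof of Lemma~\ref{lem:closed}, so here I may simply invoke it.

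Next I would apply Theorem~\ref{thm:open} to the family $\mathcal{F}'$: since $G$ is the intersection graph of a finite family of homothetic open regular $(d-1)$-simplices, Theorem~\ref{thm:open} yields that $G$ together with sufficiently many new isolated vertices is the competition graph of a $d$-partial order. That is exactly the conclusion of Theorem~\ref{thm:closed}, so the proof is complete. In short, the argument is the one-line composition: closed-simplex intersection graph $\xrightarrow{\text{Lemma~\ref{lem:closed}}}$ open-simplex intersection graph $\xrightarrow{\text{Theorem~\ref{thm:open}}}$ competition graph of a $d$-partial order (after adding isolated vertices).

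There is essentially no obstacle remaining once the two preceding results are in hand; the only thing to be careful about is that the "sufficiently many isolated vertices" in the conclusion comes entirely from Theorem~\ref{thm:open} (one isolated vertex $z_{ij}$ for each intersecting pair of simplices in the \emph{open} family $\mathcal{F}'$), and that the passage through Lemma~\ref{lem:closed} does not change the vertex set of $G$ itself — it only replaces the geometric representation, keeping the same bijection $\phi$ between simplices and vertices of $G$. Since both of these points are built into the statements being cited, the proof is a direct two-step deduction.

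\begin{proof}
By Lemma~\ref{lem:closed}, $G$ is the intersection graph of a finite family of homothetic open regular $(d-1)$-simplices. Hence, by Theorem~\ref{thm:open}, $G$ together with sufficiently many new isolated vertices is the competition graph of a $d$-partial order.
\end{proof}
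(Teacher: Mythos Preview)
Your proof is correct and matches the paper's own proof exactly: the paper simply states that the theorem follows from Lemma~\ref{lem:closed} and Theorem~\ref{thm:open}. Your two-line deduction is precisely this composition, so there is nothing to add.
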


\begin{proof}
The theorem follows from
Lemma~\ref{lem:closed} and Theorem~\ref{thm:open}.
\end{proof}

\begin{Rem}
In the case where $d=2$,
Theorem~\ref{thm:closed} is the same as Theorem~\ref{interval}.
Due to Theorem~\ref{dpo}, the converse of Theorem \ref{thm:closed} is true for $d=2$.
In fact, we can show that the converse of Theorem~\ref{thm:open} is also true for $d=2$.
\end{Rem}

The following example shows that the converses of Theorems~\ref{thm:open}
and \ref{thm:closed}
are not true for $d = 3$.

\begin{Ex}
Let $G$ be a subdivision of $K_5$ given in Figure~\ref{counterexample}.
Then, by Theorem~\ref{thm:intersectiongeneral},
the family of homothetic equilateral triangles given in the figure
makes $G$ together with $9$ isolated vertices into the competition graph of a $3$-partial order.
However,
$G$ is not the intersection graph of any family of homothetic equilateral
closed triangles.
By Lemma \ref{lem:closed},
$G$ is not the intersection graph of any family of homothetic equilateral
open triangles, either.
\end{Ex}

\begin{proof}
Suppose that there exists a family $\mathcal{F}:=\{\triangle(v) \mid v \in V(G) \}$
of homothetic equilateral closed triangles
such that $G$ is the intersection graph of $\mathcal{F}$.
Since $v_1v_2v_3v_4v_1$ is an induced cycle in $G$,
the triangles $\triangle(v_1)$, $\triangle(v_2)$, $\triangle(v_3)$, and $\triangle(v_4)$
are uniquely located as in Figure~\ref{counterexample1}
up to the sizes of triangles.
Since the vertices $v_1$, $v_3$, and $v_4$ are neighbors of both $v_5$ and $v_7$ in $G$
whereas $v_2$ is not, and the vertices $v_5$ and $v_7$ are not adjacent in $G$,
we may conclude that the locations of $\triangle(v_5)$ and $\triangle(v_7)$
should be those for the triangles I and II given in Figure~\ref{counterexample1}.
Since the triangle II cannot have intersections with $\triangle(v_2)$ and the triangle I,
all of its sides are surrounded by $\triangle(v_1)$, $\triangle(v_2)$, $\triangle(v_3)$, and $\triangle(v_4)$.
Now, since $v_6$ is adjacent to $v_5$ and $v_7$,
$\triangle(v_6)$ must have intersections with both $\triangle(v_5)$ and $\triangle(v_7)$.
However, it cannot be done without having an intersection with
one of $\triangle(v_1)$, $\triangle(v_2)$, $\triangle(v_3)$, and $\triangle(v_4)$,
which is a contradiction to the fact that
none of $v_1$, $v_2$, $v_3$, and $v_4$ is adjacent to $v_6$ in $G$.
\end{proof}

\begin{figure}
\psfrag{a}{$v_1$}
\psfrag{b}{$v_2$}
\psfrag{c}{$v_3$}
\psfrag{d}{$v_4$}
\psfrag{e}{$v_5$}
\psfrag{f}{$v_6$}
\psfrag{g}{$v_7$}
\psfrag{A}{$A^2(\mathbf{v}_1)$}
\psfrag{B}{$A^2(\mathbf{v}_2)$}
\psfrag{C}{$A^2(\mathbf{v}_3)$}
\psfrag{D}{$A^2(\mathbf{v}_4)$}
\psfrag{E}{$A^2(\mathbf{v}_5)$}
\psfrag{F}{$A^2(\mathbf{v}_6)$}
\psfrag{G}{$A^2(\mathbf{v}_7)$}
\psfrag{h}{$G$}
\begin{center}
\includegraphics[height=5cm]{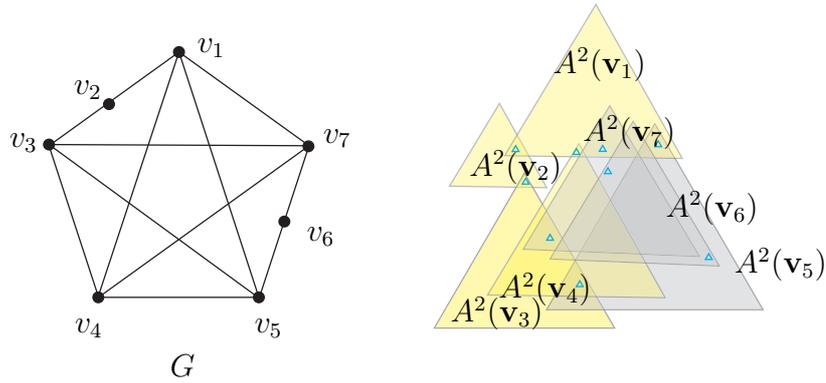}
\end{center}
\caption{A subdivision $G$ of $K_5$ and a family of homothetic equilateral triangles making
$G$ together with $9$ isolated vertices into
the competition graph of a $3$-partial order}
\label{counterexample}
\end{figure}

\begin{figure}
\psfrag{A}{$\triangle(v_1)$}
\psfrag{B}{$\triangle(v_2)$}
\psfrag{C}{$\triangle(v_3)$}
\psfrag{D}{$\triangle(v_4)$}
\psfrag{E}{I}
\psfrag{G}{II}
\begin{center}
\includegraphics[height=5cm]{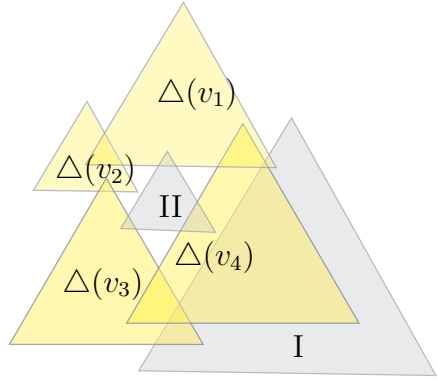}
\end{center}
\caption{An assignment of homothetic equilateral triangles to vertices
$v_1$, $v_2$, $v_3$, $v_4$, $v_5$, $v_7$ of $G$ given in Figure~\ref{counterexample}}
\label{counterexample1}
\end{figure}

\section{The partial order competition dimension of a graph}

\begin{Prop}\label{prop:01}
Let $d$ be a positive integer.
If $G$ is the competition graph of a $d$-partial order,
then $G$ is the competition graph of a $(d+1)$-partial order.
\end{Prop}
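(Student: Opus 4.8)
The plan is to realize the given $d$-partial order directly inside $\mathbb{R}^{d+1}$ by appending one extra coordinate, chosen so that the strict product order $\prec$ is neither created nor destroyed. Concretely, write $G = C(D)$ where $D = D_S$ for some finite subset $S \subseteq \mathbb{R}^d$, and consider the map $\iota \colon \mathbb{R}^d \to \mathbb{R}^{d+1}$ defined by $\iota(\mathbf{x}) = (x_1, \ldots, x_d, \mathbf{x} \cdot \mathbf{1})$. Put $S' = \iota(S) \subseteq \mathbb{R}^{d+1}$. Since the first $d$ coordinates of $\iota(\mathbf{x})$ recover $\mathbf{x}$, the map $\iota$ is injective, hence a bijection from $S$ onto $S'$.

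Next I would check that $\iota$ and its inverse both respect the relation $\prec$, so that $D_{S'}$ is isomorphic to $D_S$ via $\iota$. In one direction, if $\mathbf{v} \prec \mathbf{x}$ in $\mathbb{R}^d$ then $v_i < x_i$ for $i = 1, \ldots, d$ and also $\mathbf{v} \cdot \mathbf{1} < \mathbf{x} \cdot \mathbf{1}$, so $\iota(\mathbf{v}) \prec \iota(\mathbf{x})$ in $\mathbb{R}^{d+1}$. In the other direction, if $\iota(\mathbf{v}) \prec \iota(\mathbf{x})$ then comparing just the first $d$ coordinates already yields $\mathbf{v} \prec \mathbf{x}$. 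Hence $(\mathbf{x}, \mathbf{v}) \in A(D_S)$ if and only if $(\iota(\mathbf{x}), \iota(\mathbf{v})) \in A(D_{S'})$, i.e., $\iota$ is a digraph isomorphism from $D_S$ onto $D_{S'}$.

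Finally, $D_{S'}$ is a $(d+1)$-partial order by definition, and since $D_{S'} \cong D_S = D$ we get $C(D_{S'}) \cong C(D) = G$, so $G$ is the competition graph of a $(d+1)$-partial order. I do not expect a genuine obstacle here: the only point requiring care is the choice of the appended coordinate function $\phi$ — it must be strictly increasing in each of the original coordinates (so that no comparability among images is lost) while the original coordinates are retained intact (so that no new comparability is introduced). The choice $\phi(\mathbf{x}) = \mathbf{x}\cdot\mathbf{1}$ does the job, and in fact any single coordinate $\phi(\mathbf{x}) = x_i$ would work equally well; the argument is uniform in $d \geq 1$ and needs no special treatment of small cases.
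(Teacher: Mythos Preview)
Your proof is correct and is essentially identical to the paper's own argument: the paper also embeds $\mathbf{v}\in\mathbb{R}^d$ as $\tilde{\mathbf{v}}=(v_1,\ldots,v_d,\sum_{i=1}^d v_i)\in\mathbb{R}^{d+1}$ and checks $\tilde{\mathbf{v}}\prec\tilde{\mathbf{w}}\Leftrightarrow\mathbf{v}\prec\mathbf{w}$ in the same two-direction manner. Your additional remark that any single coordinate $\phi(\mathbf{x})=x_i$ would serve equally well is a minor elaboration not in the paper, but the substance is the same.
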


\begin{proof}
Let $D$ be a $d$-partial order such that
$G$ is the competition graph of $D$.
For each $\mathbf{v} \in V(D) \subseteq \mathbb{R}^{d}$,
we define
$\tilde{\mathbf{v}} \in \mathbb{R}^{d+1}$
by
\[
\tilde{\mathbf{v}} = \left(v_1, \ldots, v_d, \sum_{i=1}^d v_i \right).
\]
Then $\{\tilde{\mathbf{v}} \mid \mathbf{v} \in V(D) \}$
defines a $(d+1)$-partial order $\tilde{D}$.
Since
\begin{eqnarray*}
\tilde{\mathbf{v}} \prec \tilde{\mathbf{w}}
&\Leftrightarrow & {v}_i < {w}_i \ (i=1,\ldots,d) \ \mbox{ and } \  \sum_{i=1}^d v_i < \sum_{i=1}^d w_i \\
&\Leftrightarrow & {v}_i < {w}_i \ (i=1,\ldots,d) \\
&\Leftrightarrow & \mathbf{v} \prec \mathbf{w},
\end{eqnarray*}
the $(d+1)$-partial order $\tilde{D}$ is the same digraph
as the $d$-partial order $D$.
Hence $G$ is the competition graph of the $(d+1)$-partial order $\tilde{D}$.
\end{proof}

Proposition~\ref{prop:02} can be shown by using Theorem~8 in \cite{dim2poset}.
We present a new proof from which Proposition~\ref{prop:dimub} also follows.

\begin{Prop}\label{prop:02}
For any graph $G$,
there exists positive integers $d$ and $k$
such that $G$ together with $k$ isolated vertices
is the competition graph of a $d$-partial order.
\end{Prop}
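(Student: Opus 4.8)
The plan is to realize an arbitrary graph $G$ as the intersection graph of a finite family of homothetic closed regular $(d-1)$-simplices for a suitably large $d$, and then invoke Theorem~\ref{thm:closed} to conclude that $G$ together with finitely many isolated vertices is the competition graph of a $d$-partial order. So the whole argument reduces to one geometric fact: every finite graph on $n$ vertices is the intersection graph of homothetic copies of some fixed regular simplex, provided the ambient dimension is allowed to grow with $n$. The most direct route is to take $d-1 = n$, i.e. to work with homothetic regular $n$-simplices in $\mathbb{R}^{n+1}$ (or inside the hyperplane $\mathcal{H}^{n+1}$, which is where Theorem~\ref{thm:closed} wants them), and to build one simplex per vertex.

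First I would fix a base regular $n$-simplex $\triangle$ in $\mathcal{H}^{n+1}$ homothetic to $\triangle^{n}(\mathbf{1})$, with vertices $\mathbf{w}_1,\dots,\mathbf{w}_{n+1}$ as in Lemma~\ref{fixed}, and think of its $n+1$ facets as indexed by $\{1,\dots,n+1\}$. Label the vertices of $G$ as $u_1,\dots,u_n$. For each $i$ I would produce a homothetic copy $\triangle(u_i)$ by first taking a large common copy and then shrinking it slightly toward its center along the directions of exactly those facets indexed by $\{u_j : u_iu_j \in E(G)\}$ — more concretely, I would place the $n$ simplices so that their centers are spread out along a generic line or small perturbation, and calibrate their common size and the fine positions so that $\triangle(u_i) \cap \triangle(u_j) \neq \emptyset$ precisely when $u_iu_j \in E(G)$. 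The key observation making this possible is that in high dimension there is enough room: a regular $n$-simplex has $n$ "independent" facet directions, and by a small perturbation one can independently tune, for each pair $\{i,j\}$, whether the $i$th simplex pokes far enough in the $j$th facet direction to meet the $j$th simplex, without disturbing the other pairwise relations. A clean way to organize this is: take all $\triangle(u_i)$ to be translates of one another (size fixed), with $\triangle(u_i)$ centered at a point $\mathbf c_i$, and choose the $\mathbf c_i$ and the common size so that $\|\mathbf c_i - \mathbf c_j\|$ is below the "touching" threshold exactly for edges; since the pairwise-distance pattern of $n$ points in $\mathbb{R}^n$ can be prescribed quite freely (any metric on $n$ points embeds in $\mathbb{R}^{n-1}$), one can separately choose each nonedge distance large and each edge distance small, then scale the simplices up until every edge distance is "inside" and every nonedge distance is "outside."

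The main obstacle I anticipate is the book-keeping needed to show that the thresholds are genuinely independent across pairs — i.e. that enlarging the simplices to capture an edge $\{i,j\}$ does not accidentally create a forbidden intersection for some nonedge $\{i,k\}$. This is where the freedom of high dimension is essential and where a naive construction in fixed dimension fails. I would handle it by first embedding the vertices of $G$ as points $\mathbf c_1,\dots,\mathbf c_n$ with $\|\mathbf c_i-\mathbf c_j\| = 1$ for edges and $\|\mathbf c_i-\mathbf c_j\| = 3$ for nonedges (realizable in $\mathbb{R}^{n-1} \subseteq \mathbb{R}^{n+1}$, after translating into $\mathcal{H}_+^{n+1}$ if needed, since the $\binom n2$ constraints are consistent — the "path metric of a complete bipartite-ish blow-up" type argument, or simply invoking that any finite metric space of $n$ points sits in Euclidean $(n-1)$-space is not quite true, so instead I would use that the specific matrix with $1$'s on edges and $3$'s on nonedges is, after adding a large multiple of the identity, positive semidefinite, hence realizable). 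Then I take homothetic regular $n$-simplices of circumradius $\rho$ centered at the $\mathbf c_i$; two such simplices with centers at distance $\delta$ intersect iff $\delta$ is at most some $c(\rho)$ depending monotonically on $\rho$, and choosing $\rho$ so that $c(\rho) \in (1,3)$ separates edges from nonedges cleanly. Finally, after rotating/translating this family into the hyperplane $\mathcal H^{n+1}$ so the simplices are homothetic to $\triangle^n(\mathbf 1)$, Lemma~\ref{lem:closed} and Theorem~\ref{thm:closed} deliver a $(n+1)$-partial order whose competition graph is $G$ plus some $k$ isolated vertices, with $d = n+1$. (If $G$ has no edges the case is trivial, realizable already by a $0$- or $1$-partial order.)
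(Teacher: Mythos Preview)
Your overall strategy --- realize $G$ as an intersection graph of homothetic closed regular simplices and then apply Theorem~\ref{thm:closed} --- is in principle sound, but the specific construction you propose has two real gaps.

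First, the distance pattern ``$1$ on edges, $3$ on nonedges'' is not realizable in any Euclidean space, because it is not even a metric: take $G = P_3$ with edges $u_1u_2$ and $u_2u_3$; then you would need $\|\mathbf{c}_1-\mathbf{c}_3\| = 3 > 1+1 = \|\mathbf{c}_1-\mathbf{c}_2\| + \|\mathbf{c}_2-\mathbf{c}_3\|$, violating the triangle inequality. Your fallback (``add a large multiple of the identity to make the Gram matrix PSD'') does not rescue this, since the triangle inequality obstruction persists regardless. Second, the claim that two translates of a fixed regular simplex intersect if and only if the distance between their centers is below some threshold $c(\rho)$ is false: intersection of translates of a convex body $K$ is governed by the difference body $K-K$, which for a simplex is not a ball, so whether $K$ and $K+\mathbf{v}$ meet depends on the \emph{direction} of $\mathbf{v}$, not just $\|\mathbf{v}\|$. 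Together these two issues mean the ``place centers, tune radius'' step does not go through as written.

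By contrast, the paper bypasses intersection graphs entirely and gives a direct one-line coordinate construction in $\mathbb{R}^n$ where $n=|V(G)|$: fix $r_1<r_2<r_3<r_4$, send each vertex $v_i$ to the point with $i$th coordinate $r_2$ and all others $r_4$, and send each edge $e=\{v_i,v_j\}$ to the point with $i$th and $j$th coordinates $r_1$ and all others $r_3$. One checks immediately that the in-neighborhood of $\psi(e)$ is exactly $\{\phi(v_i),\phi(v_j)\}$ and that each $\phi(v_i)$ has empty in-neighborhood, so the competition graph is $G$ plus $|E(G)|$ isolated vertices. This yields $d=n$ and $k=|E(G)|$ with no geometric subtleties, and as a bonus immediately gives the bound $\dim_{\text{poc}}(G)\le |V(G)|$ of Proposition~\ref{prop:dimub}.
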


\begin{proof}
Let $n = |V(G)|$ and label the vertices of $G$ as $v_1, \ldots, v_n$.
Fix four real numbers $r_1$, $r_2$, $r_3$, and $r_4$
such that $r_1 < r_2 < r_3 < r_4$.
We define a map $\phi:V(G) \to \mathbb{R}^{n}$ by
\begin{align*}
\phi(v_i)_j =
	\begin{cases}
	r_2 & \text{if } j = i; \\
	r_4 & \text{if } j \neq i.
	\end{cases}	
\end{align*}
We define a map $\psi: E(G) \to \mathbb{R}^{n}$ by
\begin{align*}
\psi(e)_k =
	\begin{cases}
	r_1 & \text{if } v_k \in e; \\
	r_3 & \text{if } v_k \notin e.
	\end{cases}	
\end{align*}
Let $V= \{\phi(v_i) \mid v_i \in V(G) \} \cup \{\psi(e) \mid e \in E(G) \} \subseteq \mathbb{R}^{n}$.
Then $V$ defines an $n$-partial order $D$.
By definition, the in-neighborhood of the vertex $\psi(e) \in V$
is $\{\phi(v_i), \phi(v_j)\}$ for an edge $e = \{v_i,v_j\}$ of $G$ and
the in-neighborhood of the vertex $\phi(v) \in V$
is the empty set for a vertex $v$.
Thus the competition graph of $D$ is $G$
together with isolated vertices as many as $|E(G)|$.
Hence, by taking $d=n$ and $k=|E(G)|$, we complete the proof.
\end{proof}
Now we may introduce the following notion. Recall that for a finite subset $S$ of $\mathbb{R}^d$,
$D_S$ is the digraph defined by $V(D_S) = S$ and
$A(D_S) = \{(\mathbf{x},\mathbf{v}) \mid \mathbf{v}, \mathbf{x} \in S,
\mathbf{v} \prec \mathbf{x} \}$.
\begin{Defi}
For a graph $G$, we define
the \emph{partial order competition dimension} $\dim_{\text{{\rm poc}}}(G)$ of $G$
as the smallest nonnegative integer $d$
such that $G$ together with $k$ isolated vertices is the competition graph of $D$
for some $d$-partial order $D$ and some nonnegative integer $k$,
i.e.,
\[
\dim_{\text{{\rm poc}}}(G) := \min \{d \in \mathbb{Z}_{\geq 0} \mid
\exists k \in \mathbb{Z}_{\geq 0}, \exists S \subseteq \mathbb{R}^d,
\text{ s.t. }
G \cup I_k = C(D_S) \},
\]
where $\mathbb{Z}_{\geq 0}$ is the set of nonnegative integers and $I_k$ is a set of $k$ isolated vertices.
\end{Defi}

\begin{Rem}
Wu and Lu~\cite{dim2poset} introduced the notion of
the \emph{dimension-$d$ poset competition number} of a graph $G$,
denoted by $\mathcal{PK}^d(G)$,
which is defined to be the smallest nonnegative integer $p$
such that $G$ together $p$ additional isolated vertices
is isomorphic to the competition graph of a poset of dimension at most $d$
if such a poset exists, and to be $\infty$ otherwise.
By using $\mathcal{PK}^d(G)$, the partial order competition dimension of $G$
can be represented as
$\dim_{\text{{\rm poc}}}(G)=\min \{d \mid \mathcal{PK}^d(G) < \infty \}$.
Therefore $\mathcal{PK}^d(G) < \infty$ implies that $\dim_{\text{{\rm poc}}}(G) \le d$.
In this respect, Proposition~3.2 and the ``if" part of Proposition~3.10
may follow from their result presenting the dimension-$d$ poset competition numbers
of a complete graph with or without isolated vertices,
which are also shown to be trivially true in this paper.
\end{Rem}

\begin{Prop}\label{prop:dimub}
For any graph $G$, we have
$\dim_{\text{{\rm poc}}}(G) \leq |V(G)|$.
\end{Prop}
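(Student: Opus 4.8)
The plan is to observe that Proposition~\ref{prop:dimub} is essentially a corollary of the construction already carried out in the proof of Proposition~\ref{prop:02}. There, given a graph $G$ on $n = |V(G)|$ vertices, an explicit finite subset $V \subseteq \mathbb{R}^{n}$ was exhibited whose associated $n$-partial order $D$ has competition graph equal to $G$ together with $|E(G)|$ isolated vertices. Since this is precisely a witness that $G \cup I_k = C(D_S)$ for $d = n$, $k = |E(G)|$, and $S = V$, the definition of $\dim_{\text{{\rm poc}}}(G)$ as a minimum over such $d$ immediately gives $\dim_{\text{{\rm poc}}}(G) \leq n = |V(G)|$.

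So the proof I would write is just two sentences: recall that in the proof of Proposition~\ref{prop:02} we constructed an $n$-partial order (with $n = |V(G)|$) whose competition graph is $G$ together with $|E(G)|$ isolated vertices; hence $d = |V(G)|$ is one of the integers over which the minimum defining $\dim_{\text{{\rm poc}}}(G)$ is taken, and therefore $\dim_{\text{{\rm poc}}}(G) \leq |V(G)|$. One small point worth checking is the degenerate case where $G$ has no edges at all: then the construction produces no $\psi$-points, but the $\phi$-points alone still form an antichain-free — in fact an antichain — $n$-partial order whose competition graph is exactly $G$ (with $k = 0$), so the bound still holds; if $n = 0$ the statement is vacuous since $\dim_{\text{{\rm poc}}}$ of the empty graph is $0$ by the $0$-partial order convention.

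There is essentially no obstacle here: the entire content was front-loaded into the proof of Proposition~\ref{prop:02}, and the remark preceding it (``We present a new proof from which Proposition~\ref{prop:dimub} also follows'') signals exactly this. The only thing to be careful about is phrasing it so that it reads as a genuine deduction from the earlier proof rather than a restatement — i.e., explicitly matching the constructed digraph $D$ and set $V$ against the quantifiers $\exists k$, $\exists S \subseteq \mathbb{R}^d$ in the definition of $\dim_{\text{{\rm poc}}}$.

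\begin{proof}
In the proof of Proposition~\ref{prop:02}, for a graph $G$ with $n = |V(G)|$ we constructed a finite set $V \subseteq \mathbb{R}^{n}$ such that the $n$-partial order $D = D_V$ has competition graph equal to $G$ together with $|E(G)|$ isolated vertices. Thus, taking $d = n$, $k = |E(G)|$, and $S = V$, we have $G \cup I_k = C(D_S)$ with $S \subseteq \mathbb{R}^d$. Hence $d = |V(G)|$ belongs to the set over which the minimum in the definition of $\dim_{\text{{\rm poc}}}(G)$ is taken, and therefore $\dim_{\text{{\rm poc}}}(G) \leq |V(G)|$.
\end{proof}
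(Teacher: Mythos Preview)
Your proposal is correct and is essentially identical to the paper's own proof, which reads in its entirety: ``The proposition follows from the construction of a $d$-partial order in the proof of Proposition~\ref{prop:02}.'' Your version simply spells out the matching of that construction against the quantifiers in the definition of $\dim_{\text{{\rm poc}}}$, which is exactly what the paper intends. (One tiny caveat: your aside about the $n=0$ case misreads the convention---a $0$-partial order is defined to have exactly one vertex, not zero---but this does not affect your actual proof, which never invokes that case.)
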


\begin{proof}
The proposition follows from
the construction of a $d$-partial order
in the proof of Proposition~\ref{prop:02}.
\end{proof}

For a graph $G$, the partial order competition dimension of an induced subgraph of $G$
is less than or equal to that of $G$.
To show this, we need the following lemmas.

\begin{Lem}\label{lem:subgraph}
Let $D$ be a digraph and let $G$
be the competition graph of $D$.
Let $S$ be a set of vertices.
The competition graph of
$D[S]$ is a subgraph of $G[S]$,
where $D[S]$ and $G[S]$ mean the subdigraph of $D$
and the subgraph of $G$, respectively, induced by $S$.
\end{Lem}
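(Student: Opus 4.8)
The plan is simply to unwind the definitions of competition graph and induced subdigraph; no machinery from the earlier sections is needed. First I would observe that both $C(D[S])$ and $G[S]$ have vertex set exactly $S$, so it suffices to check that every edge of $C(D[S])$ is an edge of $G[S]$; this already reduces the lemma to a one-line edge-chase.

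Next, take an arbitrary edge $xy$ of $C(D[S])$. By the definition of the competition graph applied to the digraph $D[S]$, there is a vertex $z \in S$ with $(x,z) \in A(D[S])$ and $(y,z) \in A(D[S])$. Since $D[S]$ is the subdigraph of $D$ induced by $S$, we have $A(D[S]) \subseteq A(D)$, so $(x,z)$ and $(y,z)$ are arcs of $D$ as well. Applying the definition of the competition graph to $D$ itself, this shows $xy \in E(C(D)) = E(G)$; and since $x,y \in S$, the edge $xy$ lies in $E(G[S])$.

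Combining the two observations, $C(D[S])$ and $G[S]$ have the same vertex set $S$ and every edge of the former is an edge of the latter, so $C(D[S])$ is a (spanning) subgraph of $G[S]$, which is the assertion.

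I do not expect any real obstacle here; the only point needing care is bookkeeping: one must note that the vertex sets coincide so that ``subgraph'' is understood in the spanning sense, and that passing to $D[S]$ can only delete arcs of $D$ (those with head or tail outside $S$), which can only shrink the competition graph. The reason the reverse inclusion fails in general — and hence the reason the lemma is only an inequality — is exactly that a common out-neighbour $z$ of $x$ and $y$ in $D$ need not belong to $S$.
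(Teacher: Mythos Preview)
Your proof is correct and follows essentially the same approach as the paper: verify that both graphs have vertex set $S$, take an edge of $C(D[S])$, use the common out-neighbour $z\in S$ to witness it as an edge of $G$, and conclude it lies in $G[S]$. The paper's argument is identical up to notation, so there is nothing to add.
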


\begin{proof}
Let $H$ be the competition graph of
$D[S]$. Obviously, $V(H) = S$.
Take an edge $\{u,v\}$ of $H$.
By definition, there exists a vertex $w$ in $D[S]$ such that $(u,w)$ and $(v,w)$ are arcs of $D[S]$.
Consequently, $(u,w)$ and $(v,w)$ are arcs of $D$ and so $\{u,v\}$ is an edge of $G$.
Since $u,v \in S$, $\{u,v\}$ is an edge of $G[S]$. Hence $H$ is a subgraph of $G[S]$.
\end{proof}

\begin{Lem}\label{lem:outneighbor}
Let $D$ be a transitive acyclic digraph and let $G$
be the competition graph of $D$.
For any non-isolated vertex $u$ of $G$,
there exists an isolated vertex $v$ of $G$
such that $(u,v)$ is an arc of $D$.
\end{Lem}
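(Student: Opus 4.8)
The plan is to take a non-isolated vertex $u$ of $G$ and produce an out-neighbor of $u$ in $D$ that is isolated in $G$. Since $u$ is non-isolated, $u$ has at least one out-neighbor in $D$; indeed, if $u$ were adjacent to some $w$ in $G$, there is a common out-neighbor, so the out-neighborhood of $u$ in $D$ is nonempty. First I would pick a \emph{minimal} element among the out-neighbors of $u$ in $D$: since $D$ is acyclic (hence its reachability relation is a partial order on a finite set), the set $N^+_D(u)$ of out-neighbors has a minimal element $v$ with respect to the partial order induced by $D$, i.e. $v$ has no out-neighbor inside $N^+_D(u)$.

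The key step is then to show that this minimal $v$ is isolated in $G$. Suppose not; then $v$ is adjacent to some vertex $x$ in $G$, so there is a vertex $z$ in $D$ with $(v,z), (x,z) \in A(D)$. Because $D$ is transitive and $(u,v), (v,z) \in A(D)$, we get $(u,z) \in A(D)$, so $z \in N^+_D(u)$. But $(v,z) \in A(D)$ means $z$ is an out-neighbor of $v$ lying in $N^+_D(u)$, contradicting the minimality of $v$ in $N^+_D(u)$. Hence $v$ is isolated in $G$, and $(u,v)$ is an arc of $D$, as desired.

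The main obstacle — really the only subtle point — is making sure the minimality argument is valid, i.e. that $D$ restricted to $N^+_D(u)$ genuinely has a minimal element and that "minimal" is taken with respect to the right order. This is where acyclicity (finiteness plus no directed cycles, so the transitive closure is a strict partial order) is essential; without it one could have an infinite descending chain or a cycle among out-neighbors and no minimal element would exist. Transitivity is the other essential hypothesis: it is what forces $z$ (the common prey of $v$ and $x$) back into $N^+_D(u)$, which is what triggers the contradiction. Once these two uses of the hypotheses are pinned down, the rest is immediate from the definition of the competition graph.
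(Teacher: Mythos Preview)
Your proof is correct and uses essentially the same idea as the paper's: both locate, among the vertices reachable from $u$, one with no out-neighbors and hence isolated in $G$. The paper phrases this as taking the terminal vertex $v$ of a longest directed path in $D$ starting at $u$; acyclicity forces $v$ to be a sink (out-degree zero), so $v$ is immediately isolated in $G$, and transitivity then yields the arc $(u,v)$. Your version instead picks $v$ minimal in $N^+_D(u)$ and argues by contradiction, but note that transitivity already forces every out-neighbor of $v$ to lie in $N^+_D(u)$, so your minimal $v$ is in fact a sink as well---the contradiction step could be shortened to this direct observation. Either way the argument goes through.
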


\begin{proof}
Take a non-isolated vertex $u$ of $G$. Since $u$ has a neighbor $w$ in $G$,
$u$ and $w$ have a common out-neighbor in $D$. Take a longest directed path in $D$ originating from $u$.
We denote by $v$ the terminal vertex of the directed path.
Since $D$ is acyclic, the out-degree of $v$ in $D$ is zero and so $v$ is
isolated in $G$.
By the hypothesis that $D$ is transitive, $(u,v)$ is an arc of $D$.
\end{proof}

\begin{Prop}\label{prop:induced}
Let $G$ be a graph and let $H$ be an
induced subgraph of $G$. Then
$\dim_{\text{{\rm poc}}}(H) \leq \dim_{\text{{\rm poc}}}(G)$.
\end{Prop}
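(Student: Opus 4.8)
The plan is to start from a realization of $G$ as the competition graph of a $d$-partial order with $d := \dim_{\text{{\rm poc}}}(G)$ and to cut it down, along an induced subdigraph, to a realization of $H$ that uses only additional isolated vertices. Concretely, $d$ is a well-defined nonnegative integer by Propositions~\ref{prop:02} and~\ref{prop:dimub}, so I may fix a finite set $S \subseteq \mathbb{R}^d$ and a nonnegative integer $k$ with $C(D_S) = G \cup I_k$. Writing $D := D_S$ and identifying each vertex of $G$ with the point of $S$ representing it, let $U \subseteq S$ be the subset representing $V(H)$. It suffices to exhibit a finite $S' \subseteq \mathbb{R}^d$ with $C(D_{S'}) \cong H \cup I_m$ for some $m \ge 0$, since then $\dim_{\text{{\rm poc}}}(H) \le d = \dim_{\text{{\rm poc}}}(G)$. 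I will take $S'$ to be $U$ together with one ``witness point'' for each edge of $H$.

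To produce the witnesses I would use that $D$ is acyclic and transitive, because $\prec$ is a strict partial order. Fix an edge $e = \{u,v\}$ of $H$. As $H$ is an induced subgraph, $e$ is an edge of $G = C(D)$, so $u$ and $v$ have a common out-neighbor in $D$; starting from such a common out-neighbor, walking down a longest directed path in $D$, and then invoking transitivity exactly as in the proof of Lemma~\ref{lem:outneighbor}, I obtain a vertex $z_e$ of $D$ which is a sink of $D$ and is still a common out-neighbor of both $u$ and $v$. Set $Z := \{z_e \mid e \in E(H)\}$ (coincident witnesses being identified), put $S' := U \cup Z$, and let $D' := D_{S'}$. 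Since $A(D_{S'}) = A(D) \cap (S' \times S')$, the digraph $D'$ is exactly the induced subdigraph $D[S']$, and it is a $d$-partial order because $S' \subseteq \mathbb{R}^d$.

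Next I would verify that $C(D') = H \cup I_m$ with $m = |Z \setminus U|$. By Lemma~\ref{lem:subgraph}, $C(D')$ is a subgraph of $(G \cup I_k)[S']$; in particular every edge of $C(D')$ with both endpoints in $U$ is an edge of $G[U]$, which is $H$ because $H$ is induced. Conversely, for each $e = \{u,v\} \in E(H)$ the sink $z_e$ lies in $S'$ and is a common out-neighbor of $u$ and $v$ in $D'$, so $\{u,v\}$ is an edge of $C(D')$; hence $C(D')[U] = H$. Finally, each $z_e$ is a sink of $D$ and therefore a sink of $D'$, so it has no out-neighbor in $D'$ and cannot share an out-neighbor with any other vertex; thus every vertex of $S' \setminus U$ is isolated in $C(D')$, and no vertex of $U$ is adjacent in $C(D')$ to a vertex of $S' \setminus U$. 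It follows that $C(D') \cong H \cup I_m$.

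I expect the only delicate point to be insisting that each witness $z_e$ be a \emph{sink} of $D$ rather than merely a common out-neighbor of the endpoints of $e$: this is precisely what keeps the newly introduced vertices isolated in $C(D')$, and it also renders all possible coincidences harmless --- if some $z_e$ happens to lie in $U$, or if $z_e = z_{e'}$ for distinct edges $e,e'$, the only effect is to certify some edge of $H$ which, by induced-ness, is already present. Everything else is a routine check on induced subdigraphs.
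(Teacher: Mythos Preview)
Your proof is correct and follows essentially the same approach as the paper: restrict the $d$-partial order realizing $G\cup I_k$ to an induced subdigraph, use Lemma~\ref{lem:subgraph} to rule out extra edges, and use the longest-path/transitivity argument of Lemma~\ref{lem:outneighbor} to produce prey-witnesses for each edge of $H$. The only difference is cosmetic: the paper keeps \emph{all} isolated vertices of $C(D)$ (the set $I\cup J$) in the restricted vertex set, whereas you keep only one sink witness $z_e$ per edge of $H$; since sinks of $D$ are in particular isolated in $C(D)$, your set $Z$ is just a subset of the paper's $I\cup J$, and the verification is otherwise identical.
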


\begin{proof}
Let $d = \dim_{\text{{\rm poc}}}(G)$.
Then, there exists a $d$-partial order $D$ whose competition graph is
the disjoint union of $G$ and a set $J$ of isolated vertices.
Let $I$ be the set of isolated vertices in $G$.
Let $S = V(H) \cup I \cup J \subseteq \mathbb{R}^d$.
Then the digraph $D_S$ is a $d$-partial order. By Lemma~\ref{lem:subgraph},
the competition graph of $D_S$ is a subgraph of
$H \cup (I \setminus V(H)) \cup J$.

Now take two adjacent vertices $\mathbf{x}$ and $\mathbf{y}$ in $H$.
Then, since they are adjacent in $G$, there exists a vertex $\mathbf{v} \in V(D)$
such that $\mathbf{v}\prec\mathbf{x}$ and $\mathbf{v}\prec \mathbf{y}$.
If $\mathbf{v}$ is isolated in $G$ or $\mathbf{v} \in J$,
then $(\mathbf{x},\mathbf{v})$ and $(\mathbf{y},\mathbf{v})$ belong to $A(D_S)$ by definition.
Suppose that $\mathbf{v} \not\in I \cup J$. Then, by Lemma~\ref{lem:outneighbor},
there exists a vertex $\mathbf{w}$ in $I \cup J$
such that $\mathbf{w} \prec \mathbf{v}$.
Then $\mathbf{w}\prec\mathbf{x}$ and $\mathbf{w}\prec\mathbf{y}$
and so $(\mathbf{x},\mathbf{w})$ and $(\mathbf{y},\mathbf{w})$ belong to $A(D_S)$.
Thus $H \cup (I \setminus V(H)) \cup J$ is a subgraph of the competition graph of $D_S$
and we have shown that it is the competition graph of $D_S$.
Hence $\dim_{\text{{\rm poc}}}(H) \leq d$ and the proposition holds.
\end{proof}

It does not seem to be easy to compute the partial order competition dimension of a graph in general.
In this context, we first characterize graphs having small partial order competition dimensions.
In such a way, we wish to have a better idea to settle the problem.

Let $K_n$ denote the complete graph with $n$ vertices.

\begin{Prop}\label{prop:dim0}
Let $G$ be a graph.
Then, $\dim_{\text{{\rm poc}}}(G)= 0$ if and only if $G = K_1$.
\end{Prop}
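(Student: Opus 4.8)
The plan is to prove both directions directly from the definition of $\dim_{\text{poc}}$. For the ``if'' direction, suppose $G = K_1$. Recall that by convention a digraph with exactly one vertex is a $0$-partial order (the ambient space $\mathbb{R}^0$ being a single point $0$). Such a digraph has no arcs, so its competition graph is a single isolated vertex, i.e.\ $K_1$ itself (with $k=0$ added isolated vertices). Hence $\dim_{\text{poc}}(K_1) \leq 0$, and since the dimension is a nonnegative integer this forces $\dim_{\text{poc}}(K_1) = 0$.

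For the ``only if'' direction, suppose $\dim_{\text{poc}}(G) = 0$. Then $G$ together with some set $I_k$ of $k$ isolated vertices is the competition graph $C(D_S)$ for some finite $S \subseteq \mathbb{R}^0$. But $\mathbb{R}^0$ consists of the single point $0$, so $S$ is either empty or $S = \{0\}$; in either case $|S| \leq 1$. Therefore $C(D_S)$ has at most one vertex, so $|V(G)| + k \leq 1$. Since $G$ is a graph (hence has at least one vertex) and $k \geq 0$, we conclude $|V(G)| = 1$ and $k = 0$, i.e.\ $G = K_1$.

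There is essentially no obstacle here: the only thing to be careful about is handling the degenerate conventions for $\mathbb{R}^0$ and $0$-partial orders correctly, and making sure the empty-graph edge case is excluded by the standing assumption that $G$ denotes a (nonempty) graph. I would also remark, for clarity, that this proposition is the base case that anchors the classification of graphs by small partial order competition dimension, complementing Proposition~\ref{prop:01} which shows the dimension is monotone under increasing $d$.
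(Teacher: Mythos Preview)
Your proposal is correct and takes essentially the same approach as the paper: the paper's own proof consists of the single sentence ``The proposition immediately follows from the definition of $0$-partial order,'' and what you have written is simply a careful unpacking of that remark in both directions. Your handling of the degenerate $S \subseteq \mathbb{R}^0$ case (allowing $S = \emptyset$ as well as $S = \{0\}$) is slightly more permissive than the paper's convention that a $0$-partial order has exactly one vertex, but the conclusion is unaffected.
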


\begin{proof}
The proposition immediately follows from the definition of $0$-partial order.
\end{proof}

\begin{Prop}\label{prop:dim1}
Let $G$ be a graph.
Then, $\dim_{\text{{\rm poc}}}(G)= 1$ if and only if
$G = K_{t+1}$ or
$G = K_t \cup K_1$ for some positive integer $t$.
\end{Prop}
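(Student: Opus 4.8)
The plan is to analyze $1$-partial orders directly, exploiting the fact that a finite subset $S \subseteq \mathbb{R}^1$ is just a finite set of real numbers, so $D_S$ is a transitive tournament on $|S|$ vertices (every pair is comparable). First I would establish the ``if'' direction: I claim $\dim_{\text{poc}}(K_{t+1}) = 1$ and $\dim_{\text{poc}}(K_t \cup K_1) = 1$ for every positive integer $t$. Take $S = \{1, 2, \ldots, t+1\} \subseteq \mathbb{R}$. Then in $D_S$, the vertex $1$ is the unique source and it is a common out-neighbor (via the relation $\prec$, i.e.\ $1 \prec j$ for all $j > 1$) of all of $2, \ldots, t+1$; moreover any two of $2, \ldots, t+1$ compete at $1$. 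Hence the competition graph of $D_S$ has $\{2, \ldots, t+1\}$ forming a clique $K_t$ and has $1$ as an isolated vertex (its out-degree in the ``who-does-$x$-dominate'' sense is zero). A quick check of the arc definition $A(D_S) = \{(\mathbf{x},\mathbf{v}) \mid \mathbf{v} \prec \mathbf{x}\}$ shows: $C(D_S)$ has an edge between $x, y$ iff they share a common out-neighbor $z$ with $z \prec x$ and $z \prec y$, i.e.\ iff $\min(S) < x$ and $\min(S) < y$. So $C(D_S) = K_t \cup K_1$ (the $K_t$ on everything but the minimum, plus the isolated minimum). To get $K_{t+1}$ itself as the competition graph (up to isolated vertices), take $S = \{0, 1, 2, \ldots, t+1\}$: now the minimum $0$ is isolated, and $\{1, \ldots, t+1\}$ is a $K_{t+1}$. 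So $K_{t+1} \cup I_1 = C(D_S)$, giving $\dim_{\text{poc}}(K_{t+1}) \le 1$; since $K_{t+1} \ne K_1$, Proposition~\ref{prop:dim0} gives that the dimension is not $0$, hence exactly $1$. Similarly $\dim_{\text{poc}}(K_t \cup K_1) = 1$ (it is not $K_1$ and not $0$-dimensional).

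Next, the ``only if'' direction. Suppose $\dim_{\text{poc}}(G) = 1$. Then there is a finite $S \subseteq \mathbb{R}$ and $k \ge 0$ with $G \cup I_k = C(D_S)$. Since $D_S$ is determined by the linear order on $S$, I would argue as follows: write $S = \{s_1 < s_2 < \cdots < s_m\}$. By the arc definition, $(s_j, s_i) \in A(D_S)$ iff $s_i \prec s_j$ iff $i < j$. So two vertices $s_j, s_\ell$ (with $j \le \ell$) are adjacent in $C(D_S)$ iff they have a common out-neighbor, i.e.\ iff there is $i$ with $i < j$ (equivalently $i < j \le \ell$), i.e.\ iff $j \ge 2$. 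Thus $C(D_S)$ is the graph on $\{s_1, \ldots, s_m\}$ in which $s_1$ is isolated and $\{s_2, \ldots, s_m\}$ is a clique; that is, $C(D_S) = K_{m-1} \cup K_1$ when $m \ge 2$ (and $C(D_S) = K_1$ when $m = 1$, but that is the $\dim_{\text{poc}} = 0$ case). Therefore every graph of the form $G \cup I_k$ that arises as the competition graph of a $1$-partial order is isomorphic to $K_{m-1} \cup K_1$ for some $m \ge 2$. Removing the $k$ isolated vertices from $K_{m-1} \cup K_1$ leaves either $K_{m-1} \cup K_1$ (if $k = 0$), or $K_{m-1}$ (if $k = 1$), or $K_{m-1}$ with further isolated vertices deleted is impossible since $K_{m-1}$ has no isolated vertices when $m - 1 \ge 2$ — so $k \le 1$, and $G$ is $K_{m-1} \cup K_1$ or $K_{m-1}$. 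Writing $t = m - 1 \ge 1$, we conclude $G = K_{t+1}$ or $G = K_t \cup K_1$, with the degenerate subcase $t = 1$ giving $K_2$ or $K_1 \cup K_1 = \overline{K_2}$ handled uniformly (and $K_1$ alone is excluded because that forces dimension $0$).

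The one point requiring a little care — and the main obstacle — is bookkeeping the isolated vertices: I must confirm that $G$ plus $k$ isolated vertices being $K_{m-1} \cup K_1$ forces $G$ into exactly the claimed list, ruling out, e.g., $G$ having two or more isolated-vertex components (which would need $k \le 0$ and a second $K_1$ in $C(D_S)$, impossible since $C(D_S)$ has exactly one isolated vertex $s_1$). This is a short case analysis on how many of the at-most-one isolated vertex of $C(D_S)$ belong to $G$ versus to $I_k$. Everything else is routine from the linear-order description of $D_S$; no appeal to the simplex machinery of Section~2 is needed, since $d = 1$ is below the $d \ge 2$ hypothesis of Theorem~\ref{thm:intersectiongeneral}.
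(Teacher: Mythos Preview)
Your argument is correct and follows essentially the same route as the paper: both proofs observe that for a finite $S\subseteq\mathbb{R}$ the competition graph $C(D_S)$ is a clique on $S\setminus\{\min S\}$ together with the isolated vertex $\min S$, then use this structural fact for each direction and invoke Proposition~\ref{prop:dim0} to exclude $G=K_1$. One small slip to fix: when you write ``$t=m-1$'' at the end, this labeling works for the case $G=K_{m-1}\cup K_1$ but not for $G=K_{m-1}$ (there you need $t=m-2$ to get $G=K_{t+1}$); the conclusion is unaffected.
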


\begin{proof}
First we remark that
if $D$ is a $1$-partial order with $V(D) \subseteq \mathbb{R}^1$
and $v^* \in V(D)$ is the minimum among $V(D)$,
then the competition graph of $D$ is the disjoint union of
a clique $V(D) \setminus \{v^*\}$
and an isolated vertex $v^*$.
Therefore, if $\dim_{\text{{\rm poc}}}(G) = 1$,
then we obtain
$G = K_{t+1}$ or
$G = K_t \cup K_1$ for some nonnegative integer $t$.
By Proposition \ref{prop:dim0}, $G \neq K_{1}$
and thus $t$ is a positive integer.

If $G = K_{t+1}$ or
$G = K_t \cup K_1$ for some positive integer $t$,
then we obtain $\dim_{\text{{\rm poc}}}(G) \leq 1$.
By Proposition \ref{prop:dim0}, since $G \neq K_{1}$,
we have $\dim_{\text{{\rm poc}}}(G) = 1$.
\end{proof}

\begin{Lem}\label{lem:iso}
Let $G$ be a graph such that $\dim_{\text{{\rm poc}}}(G) \geq 2$ and
let $G'$ be a graph obtained from $G$ by adding isolated vertices.
Then
$\dim_{\text{{\rm poc}}}(G)= \dim_{\text{{\rm poc}}}(G')$.
\end{Lem}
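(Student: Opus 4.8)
The plan is to establish the two inequalities $\dim_{\text{{\rm poc}}}(G) \le \dim_{\text{{\rm poc}}}(G')$ and $\dim_{\text{{\rm poc}}}(G') \le \dim_{\text{{\rm poc}}}(G)$ separately. The first is immediate: $G$ is an induced subgraph of $G'$, so Proposition~\ref{prop:induced} gives $\dim_{\text{{\rm poc}}}(G) \le \dim_{\text{{\rm poc}}}(G')$. Note that this direction does not use the hypothesis $\dim_{\text{{\rm poc}}}(G) \ge 2$.

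For the reverse inequality I would write $d = \dim_{\text{{\rm poc}}}(G)$, so $d \ge 2$, and fix a finite set $S \subseteq \mathbb{R}^d$ with $C(D_S) = G \cup I_k$ for some $k \ge 0$. Write $G' = G \cup I_m$ for the appropriate $m \ge 0$. If $m \le k$ there is nothing to prove, since then $C(D_S) = G \cup I_k = G' \cup I_{k-m}$, which already witnesses $\dim_{\text{{\rm poc}}}(G') \le d$. So assume $m > k$. The idea is to enlarge $S$ by $m-k$ new points, each of which is $\prec$-incomparable to every other point of the enlarged set; such a point contributes exactly one isolated vertex to the competition graph and creates no new arcs. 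Concretely, set $a = \max\{ v_1 \mid \mathbf{v} \in S \}$ and $b = \min\{ v_2 \mid \mathbf{v} \in S \}$, and for $j = 1, \ldots, m-k$ let $\mathbf{z}^{(j)} \in \mathbb{R}^d$ be the point whose first coordinate is $a+j$, whose second coordinate is $b-j$, and whose remaining coordinates are $0$. Here $d \ge 2$ is exactly what makes room for this ``staircase'' construction; it is worth remarking in the write-up that the analogous attempt in $\mathbb{R}^1$ fails, which is why the hypothesis is needed. Put $S' = S \cup \{ \mathbf{z}^{(1)}, \ldots, \mathbf{z}^{(m-k)} \}$, a finite subset of $\mathbb{R}^d$, so that $D_{S'}$ is a $d$-partial order.

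The next step is to check that each $\mathbf{z}^{(j)}$ has neither an in-neighbor nor an out-neighbor in $D_{S'}$: its first coordinate exceeds that of every point of $S$ and is strictly increasing in $j$, while its second coordinate lies below that of every point of $S$ and is strictly decreasing in $j$, so no point of $S'$ is $\prec \mathbf{z}^{(j)}$ and $\mathbf{z}^{(j)}$ is $\prec$ no point of $S'$. Hence no arc of $D_{S'}$ is incident with any $\mathbf{z}^{(j)}$, which means $D_{S'}$ restricted to $S$ is exactly $D_S$ and each $\mathbf{z}^{(j)}$ is isolated in $C(D_{S'})$. Therefore $C(D_{S'}) = C(D_S) \cup I_{m-k} = G \cup I_k \cup I_{m-k} = G \cup I_m = G'$, so $\dim_{\text{{\rm poc}}}(G') \le d$. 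Combining this with the first inequality yields $\dim_{\text{{\rm poc}}}(G') = \dim_{\text{{\rm poc}}}(G)$.

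The only point requiring care — the main obstacle, such as it is — is the verification that the added points remain isolated and introduce no edges among the original vertices; everything else is bookkeeping on the number of isolated vertices. The case split on whether $m \le k$ is cosmetic and can be absorbed by allowing $\max(0, m-k)$ points to be added.
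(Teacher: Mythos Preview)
Your proof is correct and follows essentially the same approach as the paper: both directions are handled identically, with Proposition~\ref{prop:induced} giving one inequality and the other obtained by appending new points via the same ``staircase'' construction (first coordinate $=\max+j$, second coordinate $=\min-j$, remaining coordinates $0$). The only cosmetic difference is your case split on $m\le k$, which the paper avoids by simply adding one new point per added isolated vertex of $G'$ (yielding $G'$ together with some further isolated vertices rather than $G'$ on the nose).
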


\begin{proof}
Let $a_1, \ldots, a_k$ be the isolated vertices added to $G$ to obtain $G'$.
Let $d = \dim_{\text{{\rm poc}}}(G)$.
Then $G$ can be made into  the competition graph a $d$-partial order $D$
by adding sufficiently many isolated vertices.
Since $d \geq 2$, we can locate $k$ points $\mathbf{a}_1, \ldots, \mathbf{a}_k$ in $\mathbb{R}^d$
corresponding to $a_1, \ldots, a_k$ so that
no two points in $\{\mathbf{a}_1, \ldots, \mathbf{a}_k \}$
are related by $\prec$
and that
no point in $V(D)$ and no point in $\{ \mathbf{a}_1, \ldots, \mathbf{a}_k \}$
are related by $\prec$.
Indeed, we can do this in the following way:
for $i=1, \ldots, k$,
let $\mathbf{a}_i$ be a point in $\mathbb{R}^d$ defined by
\[
(\mathbf{a}_i)_1 = r_1 + i; \quad
(\mathbf{a}_i)_2 = r_2 - i; \quad
(\mathbf{a}_i)_j = 0 \ (j=3,\ldots,d),
\]
where
$r_1 := \max\{ (\mathbf{v})_1 \mid \mathbf{v} \in V(D) \}$
and
$r_2 := \min\{ (\mathbf{v})_2 \mid \mathbf{v} \in V(D) \}$.
Then $G'$ is the competition graph of $D$ together with $\mathbf{a}_1, \ldots, \mathbf{a}_k$
and thus $\dim_{\text{{\rm poc}}}(G') \leq \dim_{\text{{\rm poc}}}(G)$.

Since $G$ is an induced subgraph of $G'$, by Proposition~\ref{prop:induced},
$\dim_{\text{{\rm poc}}}(G) \leq \dim_{\text{{\rm poc}}}(G')$.
Hence $\dim_{\text{{\rm poc}}}(G) = \dim_{\text{{\rm poc}}}(G')$.
\end{proof}

\begin{Prop}\label{prop:dim2}
Let $G$ be a graph.
Then, $\dim_{\text{{\rm poc}}}(G) = 2$
if and only if
$G$ is an interval graph which is neither $K_s$ nor $K_t \cup K_1$
for any positive intergers $s$ and $t$.
\end{Prop}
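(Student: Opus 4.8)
The plan is to prove the two implications separately, using the earlier results about competition graphs of $d$-partial orders together with Cho and Kim's interval graph theorems. For the ``only if'' direction, suppose $\dim_{\text{{\rm poc}}}(G) = 2$. Then $G$ together with finitely many isolated vertices is the competition graph of a doubly partial order, so by Theorem~\ref{dpo} that graph is an interval graph; since $G$ is an induced subgraph of an interval graph, $G$ is itself an interval graph. Moreover $G$ cannot be $K_s$ for any positive integer $s$ nor $K_t \cup K_1$ for any positive integer $t$, because by Propositions~\ref{prop:dim0} and~\ref{prop:dim1} every such graph has partial order competition dimension at most $1$ (and $K_1$ has dimension $0$), contradicting $\dim_{\text{{\rm poc}}}(G) = 2$.

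For the ``if'' direction, suppose $G$ is an interval graph which is neither $K_s$ nor $K_t \cup K_1$. By Theorem~\ref{interval}, $G$ together with sufficiently many isolated vertices is the competition graph of a doubly partial order, so $\dim_{\text{{\rm poc}}}(G) \leq 2$. It remains to rule out $\dim_{\text{{\rm poc}}}(G) = 0$ and $\dim_{\text{{\rm poc}}}(G) = 1$. By Proposition~\ref{prop:dim0}, $\dim_{\text{{\rm poc}}}(G) = 0$ forces $G = K_1 = K_{0+1}$, excluded by hypothesis; by Proposition~\ref{prop:dim1}, $\dim_{\text{{\rm poc}}}(G) = 1$ forces $G = K_{t+1}$ or $G = K_t \cup K_1$ for some positive integer $t$, also excluded by hypothesis. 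Hence $\dim_{\text{{\rm poc}}}(G) = 2$.

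The main subtlety — though it is minor — is being careful about edge cases in the ``$G$ is not $K_s$ or $K_t \cup K_1$'' hypothesis: one must check that this hypothesis, as $s$ and $t$ range over \emph{all} positive integers, precisely excludes exactly the graphs of dimension $0$ or $1$. Proposition~\ref{prop:dim0} covers $K_1$, and since $K_1 = K_{0+1}$ is already a complete graph (with $t = 0$ not positive, so it is listed separately), one should note that $K_1$ is indeed excluded by the phrase ``neither $K_s$''. Proposition~\ref{prop:dim1} then handles all remaining dimension-$1$ graphs. No new construction is needed: the entire argument is a bookkeeping assembly of Theorems~\ref{dpo} and~\ref{interval} with Propositions~\ref{prop:induced}, \ref{prop:dim0}, and~\ref{prop:dim1}.
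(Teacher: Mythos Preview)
Your proposal is correct and follows essentially the same route as the paper: both directions are proved by combining Theorems~\ref{dpo} and~\ref{interval} with Propositions~\ref{prop:dim0} and~\ref{prop:dim1}, exactly as you do. One small remark: in your closing sentence you list Proposition~\ref{prop:induced} among the ingredients, but you never actually invoke it---the step ``$G$ is an induced subgraph of an interval graph, hence interval'' uses the hereditary property of interval graphs, not Proposition~\ref{prop:induced} (which concerns $\dim_{\text{{\rm poc}}}$).
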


\begin{proof}
Suppose that $\dim_{\text{{\rm poc}}}(G) = 2$.
By Theorem \ref{dpo},
$G$ is an interval graph.
By Propositions \ref{prop:dim0} and \ref{prop:dim1},
$G$ is neither $K_s$ nor $K_t \cup K_1$ for any positive intergers $s$ and $t$.

Suppose that
$G$ is an interval graph which is
neither $K_s$ nor $K_t \cup K_1$ for any positive intergers $s$ and $t$.
By Theorem \ref{interval},
$\dim_{\text{{\rm poc}}}(G) \leq 2$.
By Propositions \ref{prop:dim0} and \ref{prop:dim1}
$\dim_{\text{{\rm poc}}}(G) \ge 2$.
Thus, $\dim_{\text{{\rm poc}}}(G) = 2$.
\end{proof}

\begin{Prop}\label{prop:cycle}
If $G$ is a cycle of length at least four, then
$\dim_{\text{{\rm poc}}}(G) = 3$.
\end{Prop}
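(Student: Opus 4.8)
The plan is to prove the two bounds $\dim_{\text{{\rm poc}}}(G)\ge 3$ and $\dim_{\text{{\rm poc}}}(G)\le 3$ separately, for $G=C_n$ with $n\ge 4$. For the lower bound I would argue as follows. Since $C_n$ is connected, has at least four vertices, and contains non-adjacent vertices, it is neither a complete graph $K_s$ nor a graph of the form $K_t\cup K_1$, so Propositions~\ref{prop:dim0} and~\ref{prop:dim1} give $\dim_{\text{{\rm poc}}}(G)\ge 2$. Moreover $C_n$ with $n\ge 4$ is an induced cycle of length at least four, hence is not chordal and in particular not an interval graph; by Proposition~\ref{prop:dim2} this rules out $\dim_{\text{{\rm poc}}}(G)=2$. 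As $\dim_{\text{{\rm poc}}}(G)$ is a nonnegative integer, these two facts force $\dim_{\text{{\rm poc}}}(G)\ge 3$.

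For the upper bound it suffices, by Theorem~\ref{thm:closed} applied with $d=3$, to realize $C_n$ as the intersection graph of a finite family of homothetic closed equilateral triangles. I would fix an equilateral triangle $T$ with centroid at the origin, and for a parameter $R>0$ to be chosen, set $\triangle(v_i):=c_i+T$, where $c_1,\dots,c_n$ are the vertices of a regular $n$-gon of circumradius $R$ centered at the origin (so the $\triangle(v_i)$ are congruent, hence homothetic, equilateral triangles). The key observation is that $(c_i+T)\cap(c_j+T)\ne\emptyset$ if and only if $c_i-c_j$ lies in the difference set $H:=\{\,t-t'\mid t,t'\in T\,\}$, which for an equilateral triangle is a regular hexagon centered at the origin; write $r_H$ and $R_H$ for its inradius and circumradius, so $R_H/r_H=2/\sqrt{3}$. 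Consecutive vertices of the $n$-gon are at distance $2R\sin(\pi/n)$, and any two non-consecutive vertices are at distance at least $2R\sin(2\pi/n)$. Hence, if $R$ is chosen so that $2R\sin(\pi/n)<r_H$ and $2R\sin(2\pi/n)>R_H$, then $\triangle(v_i)$ and $\triangle(v_j)$ meet precisely when $v_i$ and $v_j$ are consecutive, i.e. the family has intersection graph $C_n$. Such an $R$ exists exactly when $R_H\sin(\pi/n)<2r_H\sin(\pi/n)\cos(\pi/n)$, that is, when $\cos(\pi/n)>R_H/(2r_H)=1/\sqrt{3}$, and this inequality holds for every $n\ge 4$. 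Fixing such an $R$ and invoking Theorem~\ref{thm:closed} gives $\dim_{\text{{\rm poc}}}(C_n)\le 3$, and together with the lower bound this yields $\dim_{\text{{\rm poc}}}(C_n)=3$.

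The main obstacle is the geometric realization of $C_n$ in the second step: the ``overlap tolerance'' of two congruent equilateral triangles depends on the direction of the displacement vector, and this direction-dependence must be controlled uniformly over the $n$ edge directions of the $n$-gon. The device that makes this work is to sandwich that tolerance between the inradius and the circumradius of the hexagon $H$, which decouples the directions and reduces the entire construction to the elementary trigonometric inequality $\cos(\pi/n)>1/\sqrt{3}$, valid exactly for $n\ge 4$; everything else is routine.
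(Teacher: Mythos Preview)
Your argument is correct and follows the same overall strategy as the paper: the lower bound via Propositions~\ref{prop:dim0}, \ref{prop:dim1}, \ref{prop:dim2} (using that $C_n$ is not an interval graph for $n\ge 4$) is identical, and for the upper bound both proofs exhibit a family of homothetic closed equilateral triangles with intersection graph $C_n$ and then invoke Theorem~\ref{thm:closed} with $d=3$.

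The only difference is in how the triangle family is produced. The paper simply refers to a figure displaying a suitable arrangement of $n$ triangles. Your construction places congruent translates of a fixed equilateral triangle at the vertices of a regular $n$-gon and controls intersections via the Minkowski difference $H=T-T$, which is a regular hexagon; the inradius/circumradius sandwich then reduces the existence of a good scale $R$ to the inequality $\cos(\pi/n)>1/\sqrt{3}$, valid for all $n\ge 4$. This is a clean, fully explicit, and orientation-independent realization, whereas the paper's construction is pictorial. Both are short; yours has the advantage of being self-contained and verifiable without a figure.
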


\begin{proof}
Let $G$ be a cycle of length $n$ with $n \geq 4$.
Note that $G$ is not an interval graph.
By Propositions \ref{prop:dim0}, \ref{prop:dim1}, and \ref{prop:dim2},
we have $\dim_{\text{{\rm poc}}}(G) \geq 3$.
Let $\mathcal{F}$ be the family of $n$ closed triangles given in Figure~\ref{dimcycle}.
\begin{figure}
\begin{center}
\includegraphics[height=5cm]{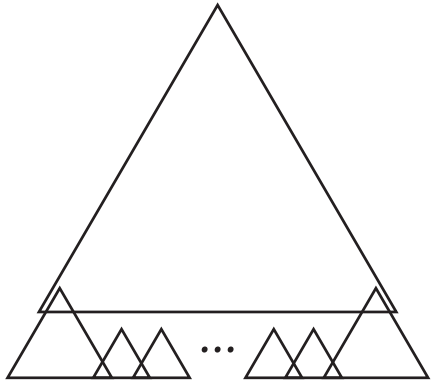}
\end{center}
\caption{A family of homothetic equilateral closed triangles}
\label{dimcycle}
\end{figure}
Then the intersection graph of $\mathcal{F}$ is the cycle of length $n$.
By Theorem~\ref{thm:closed} with $d=3$, $G$ together with sufficiently many isolated vertices
is the competition graph of a $3$-partial order.
Thus $\dim_{\text{{\rm poc}}}(G) \leq 3$.
Hence $\dim_{\text{{\rm poc}}}(G) = 3$.
\end{proof}

\begin{Thm}
If a graph $G$ contains an induced cycle of length at least four,
then $\dim_{\text{{\rm poc}}}(G) \geq 3$.
\end{Thm}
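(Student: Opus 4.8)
The plan is to deduce this immediately from the two tools already established: the monotonicity of the partial order competition dimension under taking induced subgraphs (Proposition~\ref{prop:induced}) and the exact value for cycles (Proposition~\ref{prop:cycle}). So the argument is essentially a one-line corollary, and I would present it as such.

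First I would let $C$ denote an induced cycle of length at least four contained in $G$; such a $C$ exists by hypothesis, and since $C$ is \emph{induced}, it is an induced subgraph of $G$ in the sense of Proposition~\ref{prop:induced}. Applying that proposition with $H = C$ gives $\dim_{\text{{\rm poc}}}(C) \leq \dim_{\text{{\rm poc}}}(G)$. Next I would invoke Proposition~\ref{prop:cycle}, which asserts that a cycle of length at least four has partial order competition dimension exactly $3$, hence $\dim_{\text{{\rm poc}}}(C) = 3$. Combining the two inequalities yields $3 = \dim_{\text{{\rm poc}}}(C) \leq \dim_{\text{{\rm poc}}}(G)$, which is precisely the claim.

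There is no real obstacle here: the content has all been pushed into Propositions~\ref{prop:induced} and \ref{prop:cycle}. The only point deserving a word of care is making sure the cycle in question is used as an \emph{induced} subgraph — the hypothesis explicitly supplies an induced cycle, so Proposition~\ref{prop:induced} applies directly, and no further work (and no appeal to the lower bounds in Propositions~\ref{prop:dim0}--\ref{prop:dim2}, which were already absorbed into Proposition~\ref{prop:cycle}) is needed.
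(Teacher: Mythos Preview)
Your proposal is correct and matches the paper's own proof exactly: the paper simply states that the theorem follows from Propositions~\ref{prop:induced} and~\ref{prop:cycle}. Your write-up just makes explicit the two-step chain $3 = \dim_{\text{{\rm poc}}}(C) \leq \dim_{\text{{\rm poc}}}(G)$ that the paper leaves implicit.
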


\begin{proof}
The theorem follows from Propositions~\ref{prop:induced} and~\ref{prop:cycle}.
\end{proof}

\begin{Thm}\label{thm:tree}
Let $T$ be a tree.
Then $\dim_{\text{{\rm poc}}}(T) \leq 3$,
and the equality holds if and only if $T$
is not a caterpillar.
\end{Thm}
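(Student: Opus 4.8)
The plan is to establish three facts: (A) $\dim_{\text{{\rm poc}}}(T)\le 3$ for every tree $T$; (B) if $T$ is not a caterpillar then $\dim_{\text{{\rm poc}}}(T)\ge 3$; and (C) if $T$ is a caterpillar then $\dim_{\text{{\rm poc}}}(T)\le 2$. Granting these, a caterpillar has $\dim_{\text{{\rm poc}}}(T)<3$ while a non-caterpillar has $\dim_{\text{{\rm poc}}}(T)=3$ by (A) and (B), which is exactly the asserted equality condition.

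For (A), by Theorem~\ref{thm:closed} with $d=3$ it is enough to show that every tree is the intersection graph of a finite family of homothetic closed equilateral triangles in $\mathbb{R}^2$. I would prove this by induction on $|V(T)|$ under the strengthened hypothesis that such a representation exists in which, for each vertex $v$, one corner of the triangle $\triangle(v)$ assigned to $v$ --- together with a small open neighborhood of that corner --- is disjoint from all the other triangles, and moreover these ``free neighborhoods'' may be taken pairwise disjoint. In the inductive step one deletes a leaf $u$ with neighbor $v$, applies the hypothesis to $T-u$, and reinserts a very small triangle for $u$ (of the same orientation) straddling an edge of $\triangle(v)$ at an interior point of that edge lying in $v$'s free neighborhood but distinct from the free corner itself. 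For small enough size this triangle meets only $\triangle(v)$; the free corner of $\triangle(v)$ survives, and two corners of the new triangle protrude into free space and give its own free corner. Checking that the invariant --- including the pairwise disjointness of the free neighborhoods --- is restored at each step is the main technical point; an explicit picture analogous to Figure~\ref{dimcycle} could also be used here.

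For (B), recall the classical fact that a tree is a caterpillar if and only if it has no subgraph isomorphic to the spider $S$ obtained from $K_{1,3}$ by subdividing every edge once, and that any such subgraph of a tree is in fact an induced subgraph. So a non-caterpillar tree contains $S$ as an induced subgraph, and by Proposition~\ref{prop:induced} it suffices to prove $\dim_{\text{{\rm poc}}}(S)\ge 3$. But $S$ is not an interval graph, since its three leaves form an asteroidal triple (any two of them are joined by a path through the center that misses the closed neighborhood of the third). As $S$ is also neither $K_s$ nor $K_t\cup K_1$, Propositions~\ref{prop:dim0}, \ref{prop:dim1} and \ref{prop:dim2} exclude $\dim_{\text{{\rm poc}}}(S)\in\{0,1,2\}$, while Proposition~\ref{prop:02} shows it is finite; hence $\dim_{\text{{\rm poc}}}(S)\ge 3$, and therefore $\dim_{\text{{\rm poc}}}(T)\ge 3$.

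For (C), I would give an explicit interval representation of a caterpillar $T$ with spine $p_1p_2\cdots p_m$: assign the interval $[i,i+1]$ to $p_i$, and to each leaf attached at $p_i$ a short interval inside the open interval $(i,i+1)$, chosen pairwise disjoint for leaves sharing a spine vertex. Since a short interval inside $(i,i+1)$ meets $[i,i+1]$ but is disjoint from $[i-1,i]$ and $[i+1,i+2]$, this realizes exactly the edges of $T$, so $T$ is an interval graph; by Theorem~\ref{interval} this gives $\dim_{\text{{\rm poc}}}(T)\le 2$, completing the proof.
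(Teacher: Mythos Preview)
Your argument is correct and follows the same overall strategy as the paper: reduce the upper bound to Theorem~\ref{thm:closed} by exhibiting every tree as the intersection graph of homothetic closed equilateral triangles, and settle the equality case via the equivalence ``a tree is an interval graph iff it is a caterpillar'' together with Propositions~\ref{prop:dim0}, \ref{prop:dim1}, and~\ref{prop:dim2}. The substantive difference lies in the inductive construction for (A). The paper roots $T$ at an arbitrary vertex $v$, obtains by induction a ``good family'' for each component $T_i$ of $T-v$ in which the triangle of the root $w_i$ has its apex and base strictly above those of every other triangle, lines up these families side by side with the apexes of the $\triangle(w_i)$ on the $x$-axis, and then adds one large triangle $\triangle(v)$ whose base sits just below those apexes so that it meets precisely the $\triangle(w_i)$. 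Your leaf-deletion induction with a reserved ``free corner'' invariant is a different construction; it also works, but the bookkeeping you flag (shrinking and maintaining pairwise-disjoint free neighborhoods as leaves accumulate at the same vertex) is exactly where the care is needed, whereas the paper's top-down height invariant sidesteps this. For (B) and (C) the paper is terser, simply citing the caterpillar/interval characterization, while you unpack it via the spider's asteroidal triple and an explicit interval model; your version is more self-contained, the paper's more economical.
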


\begin{proof}
By Theorem~\ref{thm:closed} with $d=3$, we need to show that
there exists a family of homothetic equilateral closed triangles in $\mathbb{R}^2$ whose intersection graph is $T$.
As a matter of fact,
it is sufficient to find such a family in the $xy$-plane
with the base of each triangle parallel to the $x$-axis.
We call the vertex of a triangle which is opposite to the base the \emph{apex} of the triangle.
We show the following stronger statement
by induction on the number of vertices:
\begin{quote}
For a tree $T$ and a vertex $v$ of $T$,
there exists a family $\mathcal{F}^T_v := \{ \triangle(x) \mid x \in V(T)\}$
of homothetic equilateral closed triangles whose intersection graph is $T$ such that,
for any vertex $x$ distinct from $v$, the apex
and the base of $\triangle(x)$ are below the apex and the base of $\triangle(v)$, respectively.
\end{quote}
We call the family $\mathcal{F}^T_v$ in the above statement
a \emph{good family} for $T$ and $v$.

If $T$ is the tree having exactly one vertex, then the statement is vacuously true.
Assume that the statement holds for any tree on $n-1$ vertices, where $n \geq 2$.
Let $T$ be a tree with $n$ vertices.
We fix a vertex $v$ of $T$ as a root.
Let $T_1, \ldots, T_k$ $(k \geq 1)$ be the connected components of $T - v$.
Then $T_1, \ldots, T_k$ are trees.
For each $i = 1, \ldots, k$,
$T_i$ has exactly one vertex, say $w_i$, which is a neighbor of $v$ in $T$.
We take $w_i$ as a root of $T_i$.
By the induction hypothesis,
there exists a good family $\mathcal{F}^{T_i}_{w_i}$ for $T_i$ and $w_i$
for each $i = 1, \ldots, k$.
Preserving the intersection or the non-intersection of two triangles in $\mathcal{F}^{T_i}_{w_i}$
for each $i=1, \ldots, k$,
we may translate the triangles in
$\mathcal{F}^{T_1}_{w_1} \cup \cdots \cup \mathcal{F}^{T_k}_{w_k}$
so that the apexes of $\triangle(w_1)$, $\ldots$, $\triangle(w_k)$ are on the $x$-axis
and any two triangles from distinct families do not intersect.
Let $\delta_i$ be the distance between the apex of $\triangle(w_i)$ and the apex of a triangle
which is the second highest among the apexes of the triangles in $\mathcal{F}^{T_i}_{w_i}$.
Now we draw a triangle $\triangle(v)$ in such a way that
the base of $\triangle(v)$ is a part of the line
$y = -\frac{1}{2}\min \{ \delta_1, \ldots, \delta_k \}$
and long enough to intersect all of the triangles $\triangle(w_1), \ldots, \triangle(w_k)$.
Then the family
$\mathcal{F}^{T}_{v} := \mathcal{F}^{T_1}_{w_1} \cup \cdots \cup \mathcal{F}^{T_k}_{w_k}
\cup \{\triangle(v)\}$ is a good family for $T$ and $v$
and thus the statement holds.
Hence, $\dim_{\text{{\rm poc}}}(T) \leq 3$ for a tree $T$.

Since trees which are interval graphs are caterpillars,
the latter part of the theorem follows from
Propositions \ref{prop:dim0}, \ref{prop:dim1}, and \ref{prop:dim2}.
\end{proof}

\section{Concluding Remarks}

In this paper, we studied the competition graphs of $d$-partial orders
and gave a characterization by using homothetic open simplices.
Since any graph can be made into the competition graph of a $d$-partial order
for some positive integer $d$ by adding isolated vertices,
we introduced the notion of the partial order competition dimension of a graph.
We gave characterizations of graphs having partial order competition dimension $0$, $1$, and $2$.
We also showed that cycles and trees have
partial order competition dimension at most $3$.
It would be an interesting research problem
to characterize graphs $G$ having partial order competition dimension $3$.



\begin{thebibliography}{99}

\bibitem{chokim}
{H.~H.~Cho and S.~-R.~Kim}:
{A class of acyclic digraphs with interval competition graphs},
\emph{Discrete Applied Mathematics}
\textbf{148} (2005) 171--180.

\bibitem{SJkim}
{S.~-J.~Kim, S.~-R.~Kim, and Y.~Rho}:
{On CCE graphs of doubly partial orders},
\emph{Discrete Applied Mathematics}
\textbf{155} (2007) 971--978.

\bibitem{Kim93}
{S.~-R.~Kim}:
{The competition number and its variants},
in J.~Gimbel, J.~W.~Kennedy, and L.~V.~Quintas (eds.),
\emph{Quo Vadis Graph Theory?},
\emph{Ann.\ Discrete Math.}, Vol. \textbf{55} (1993) 313--325.

\bibitem{Niche2009}
{S.~-R.~Kim, J.~Y.~Lee, B.~Park, W.~J.~Park, and Y.~Sano}:
{The niche graphs of doubly partial orders},
\emph{Congressus Numerantium}
\textbf{195} (2009) 19--32.

\bibitem{hypergraph}
{S.~-R.~Kim, J.~Y.~Lee, B.~Park, and Y.~Sano}:
{The competition hypergraphs of doubly partial orders},
\emph{Discrete Applied Mathematics}
\textbf{165} (2014) 185--191.

\bibitem{LuWu}
{J.~Lu and Y.~Wu}:
{Two minimal forbidden subgraphs for double competition graphs
of posets of dimension at most two},
\emph{Applied Mathematics Letters}
\textbf{22} (2009) 841--845.

\bibitem {Lundgren89}
{J.~R.~Lundgren}:
Food webs, competition graphs, competition-common enemy graphs, and niche graphs,
in F.S. Roberts (ed.),
\emph{Applications of Combinatorics and Graph Theory in the
Biological and Social Sciences},
IMA Volumes in Mathematics and its Applications, Vol. \textbf{17},
Springer-Verlag, New York (1989) 221--243.

\bibitem{PLK:mStepDPO}
{B.~Park, J.~Y.~Lee, S.~-R.~Kim}:
{The $m$-step competition graphs of doubly partial orders},
\emph{Applied Mathematics Letters}
\textbf{24} (2011) 811--816.

\bibitem{Phylogeny}
{B.~Park and Y.~Sano}:
{The phylogeny graphs of doubly partial orders},
\emph{Discussiones Mathematicae Graph Theory}
{\bf 33} (2013) 657--664.

\bibitem{dim2poset}
{Y.~Wu and J.~Lu}:
{Dimension-2 poset competition numbers and dimension-2 poset double competition numbers},
\emph{Discrete Applied Mathematics}
{\bf 158} (2010) 706--717.

\end{thebibliography}
\end{document}